\begin{document}

\title{\Large Multilinear Time Invariant System Theory \thanks{Supported by Air Force Office of Scientific Research, the National Science Foundation, and the Lifelong Learning Machines program from DARPA/MTO.}}
\author{Can Chen\thanks{Department of Mathematics and Department of Electrical Engineering and Computer Science, University of Michigan.}\\
\and
Amit Surana\thanks{United Technologies Research Center.}\\
\and
Anthony Bloch\thanks{Department of Mathematics, University of Michigan.}\\
\and
Indika Rajapakse\thanks{Department of Computational Medicine \& Bioinformatics, Medical School and Department of Mathematics, University of Michigan.}
}
\date{}

\maketitle







\begin{abstract} \small\baselineskip=9pt In biological and engineering systems, structure, function and dynamics are highly coupled. Such interactions can be naturally and compactly captured via tensor based state space dynamic representations. However, such representations are not amenable to the standard system and controls framework which requires the state to be in the form of a vector. In order to address this limitation, recently a new class of multiway dynamical systems has been introduced in which the states, inputs and outputs are tensors. We propose a new form of multilinear time invariant (MLTI) systems based on the Einstein product and even-order paired tensors. We extend classical linear time invariant (LTI) system notions including stability, reachability and observability for the new MLTI system representation by leveraging recent advances in tensor algebra.\end{abstract}

\section{Introduction} In many complex systems, such as those arising in biology, capturing the interplay of function, structure and dynamics is critical in order to characterize the underlying mechanisms \cite{7798500}. The human genome is a beautiful example of a multiway dynamical system \cite{Rajapakse_2011}.  The organization of the interphase nucleus reflects a dynamical interaction between 3D genome structure, function, and its relationship to phenotype, a concept known as the 4D Nucleome (4DN) \cite{Chen_2015}. 4DN research requires a comprehensive view of genome-wide structure, gene expression, the proteome, and phenotype which fits naturally with a tensorial representation \cite{7798500}.
The mathematical foundation of tensor based representation and analysis could play a critical role in the study of the human genome as well as social networks, cognitive science, signal processing and machine learning.

The notion of multilinear dynamical system or multilinear time invariant (MLTI) system was first introduced by Rogers et al. \cite{rogers_2013} for modeling of tensor time series, and Surana et al. \cite{7798500} built the model by using tensor Tucker products to capture the evolution of the multilinear dynamics. Compared to classical linear time invariant (LTI) based approaches which fit vector or matrix models to tensor time series, MLTI representation provides a more natural, compact and accurate representation of tensorial data with fewer model parameters. By using tensor unfolding, an operation that transforms a tensor into a matrix, Rogers et al. \cite{rogers_2013} and Surana et al. \cite{7798500} developed methods for model identification/reduction from tensor time series data, and demonstrated benefits of the MTLI representation compared to the classical LTI approach. However, this representation of MLTI systems is limited by the fact that it assumes the multilinear operators are formed from the Tucker products of matrices, and thus precludes more general tensorial representation. Moreover, while tensor unfolding enables one to transform MLTI into a classical LTI representation for computational purposes (i.e. enables use of matrix algebra), one loses the inherent tensor algebraic structure which otherwise could be exploited to develop system theoretic concepts.

In this paper, we propose a more generalized MLTI system model using the Einstein product, from which the Tucker product based MTLI representation can be obtained as a special case. The Einstein product is a tensor contraction operation used quite often in tensor calculus and has profound applications in the study of continuum mechanics and the field of relativity theory \cite{Lai_2009,Einstein_2007}. The proposed generalized MLTI system model takes a very similar form to the classical LTI system  model, and is thus more naturally suited to develop system theoretic concepts. Moreover, the space of even-order tensors equipped with the Einstein product has many desirable properties. Brazell et al. \cite{doi:10.1137/100804577} in 2013 discovered that one particular tensor unfolding gives rise to an isomorphism from this tensor space (of even-order tensors equipped with the Einstein  product) to the general linear group, i.e. group of invertible matrices.

Building on the results of Brazell et al. \cite{doi:10.1137/100804577} and the notion of block tensors \cite{doi:10.1137/110820609}, we propose new tensor notions for positive definiteness, unfolding rank and a new way of concatenation of tensors to create block tensors. Using these tensor constructs, we develop tensor algebraic conditions for stability, reachability and observability for the generalized MLTI systems.  Interestingly, these new conditions look analogous to the classical conditions for stability, reachability and observability for LTI systems, and reduce to them in special cases. To the best of the authors' knowledge, MTLI system representation using the Einstein product and formulation of these tensor algebraic system theoretic conditions has never been reported in the literature. Due to space limitations, we only provide proofs for selected results. A more comprehensive publication is under preparation.

The paper is organized in five sections. We start with basics of tensor algebra followed with tensor groups, block tensors and tensor eigenvalue decompositions in Section \ref{sec:TensorAlgebra}. A new general representation of MLTI systems is introduced in Section \ref{sec:MLTI}, and generalization of stability, reachability and observability conditions for the MLTI systems is also discussed. A simple single input single output MLTI system example is given in Section \ref{sec:example}, and we conclude with directions for future research in Section \ref{sec:future}.

\section{Tensor Algebra}\label{sec:TensorAlgebra}
We use concepts and notations for tensor algebra from the comprehensive works of Kolda et al. \cite{Kolda06multilinearoperators,kolda2009tensor} and Ragnarsson et al. \cite{doi:10.1137/110820609}. A \textit{tensor} is a multidimensional array. The \textit{order} of a tensor is the number of its dimensions.  An $N$-th order tensor usually is denoted by $\mathcal{X}\in \mathbb{R}^{J_1\times J_2\times  \dots \times J_N}$. The sets of indexed indices and size of $\mathcal{X}$ are denoted by $\textbf{j}=\{j_1,j_2,\dots,j_N\}$ and $\mathscr{J}=\{J_1,J_2,\dots,J_N\}$, respectively. $|\mathscr{J}|$ represents the product of all elements in $\mathscr{J}$, and $\textbf{j}\in[\mathscr{J}]$ can be interpreted as $j_n=1,2,\dots,J_n$ for $n=1,2,\dots,N$. It is therefore reasonable to consider scalars $x\in\mathbb{R}$ as zero-order tensors, vectors $\textbf{v}\in\mathbb{R}^{J}$ as first-order tensors, and matrices $\textbf{A}\in\mathbb{R}^{J\times I}$ as second-order tensors.

By extending the notion of vector outer product, the \textit{outer product} of two tensors $\mathcal{X}\in \mathbb{R}^{J_1\times J_2\times \dots \times J_N}$ and $\mathcal{Y}\in \mathbb{R}^{I_1\times I_2\times \dots \times I_M}$ is defined as
\begin{equation}
(\mathcal{X}\circ \mathcal{Y})_{j_1j_2\dots j_Ni_1i_2\dots i_M}=\mathcal{X}_{j_1j_2\dots j_N}\mathcal{Y}_{i_1i_2\dots i_M}. \label{eq99}
\end{equation}
In contrast, the \textit{inner product} of two tensors $\mathcal{X},\mathcal{Y}\in \mathbb{R}^{J_1\times J_2\times \dots \times J_N}$ is defined as
\begin{equation}
\langle \mathcal{X},\mathcal{Y}\rangle =\sum_{\textbf{j}=\textbf{1}}^{\mathscr{J}}\mathcal{X}_{j_1j_2\dots j_N}\mathcal{Y}_{j_1j_2\dots j_N}\label{eq3}
\end{equation}
leading to the \textit{Frobenius norm} $\|\mathcal{X}\|^2=\langle \mathcal{X},\mathcal{X}\rangle$. The notation $\sum_{\textbf{j}=\textbf{1}}^{\mathscr{J}}$ can be read as an abbreviation of $N$  summations over all indices $j_n=1,2,\dots,J_n$ for $n=1,2,\dots,N$.

The \textit{matrix tensor multiplication} $\mathcal{X} \times_{n} \textbf{A}$ along mode $n$ for a matrix $\textbf{A}\in  \mathbb{R}^{I\times J_n}$ is defined by
\begin{equation}
(\mathcal{X} \times_{n} \textbf{A})_{j_1j_2\dots j_{n-1}ij_{n+1}\dots j_N}=\sum_{j_n=1}^{J_n}\mathcal{X}_{j_1j_2\dots j_n\dots j_N}\textbf{A}_{ij_n}.\label{eq4}
\end{equation}
This product can be generalized to what is known as the \textit{Tucker product},
\begin{equation}
\begin{split}
&\mathcal{X}\times_1 \textbf{A}_1 \times_2\dots \times_{N}\textbf{A}_N\\&=\mathcal{X}\times\{\textbf{A}_1,\textbf{A}_2\dots,\textbf{A}_N\}\in  \mathbb{R}^{I_1\times I_2\times\dots \times I_N},\label{eq5}
\end{split}
\end{equation}
where, $\textbf{A}_n\in \mathbb{R}^{I_n\times J_n}$. If an $N$-th order tensor $\mathcal{Y}\in  \mathbb{R}^{I_1\times I_2\times\dots \times I_N}$ can be expressed as $\mathcal{Y} = \mathcal{X}\times\{\textbf{A}_1,\textbf{A}_2\dots,\textbf{A}_N\}$, the decomposition is referred to as the \textit{Tucker decomposition}. When the  \textit{core tensor} $\mathcal{X}$ possesses the ``higher-order diagonal'' property and the \textit{factor matrices} $\textbf{A}_{n},n=1,\cdots,N$ are unitary, it is also called the \textit{Higher-Order Singular Value Decomposition} (HOSVD), a multilinear generalization of the matrix Singular Value Decomposition (SVD) \cite{Lathauwer2000}.

\textit{Tensor unfolding} is considered as a critical operation in tensor computations \cite{doi:10.1137/110820609, Kolda06multilinearoperators,kolda2009tensor}. In order to unfold a tensor $\mathcal{X}\in\mathbb{R}^{J_1\times J_2\times\dots \times J_N}$ into a matrix, we use an index mapping function $ivec(\cdot,\mathscr{J}):\underbrace{\mathbb{Z}^+\times \mathbb{Z}^+\times \dots \times \mathbb{Z}^+}_{N }\rightarrow \mathbb{Z}^+$ defined by Ragnarsson et al. \cite{doi:10.1137/110820609}, which is given as
\begin{equation}
ivec(\textbf{j},\mathscr{J}) = j_1+\sum_{k=2}^N(j_k-1)\prod_{l=1}^{k-1}J_l.
\end{equation}
Suppose that $\textbf{r} = \mathbb{S}(1:z)$ and $\textbf{c} = \mathbb{S}(z+1:N)$, where $z$ is an integer such that $1\leq z < N$, and $\mathbb{S}$ is a vector from the set of all permutations of $1$ to $N$. Define $\mathscr{J}(\textbf{r}) = \{J_{\mathbb{S}(1)},J_{\mathbb{S}(2)},\dots,J_{\mathbb{S}(z)}\}$ and $\mathscr{J}(\textbf{c}) = \{J_{\mathbb{S}(z+1)},J_{\mathbb{S}(z+2)},\dots,J_{\mathbb{S}(N)}\}$. Then the $\textbf{r}\times \textbf{c}$ unfolding matrix of $\mathcal{X}$ denoted by $\textbf{X}_{\textbf{r}\times \textbf{c}}$ is given by
\begin{equation}
\textbf{X}_{\textbf{r}\times \textbf{c}} (j,i) = \mathcal{X}^{\mathbb{S}}_{j_1j_2\dots j_zi_1i_2\dots i_{N-z}},\label{eq20}
\end{equation}
where, $j = ivec(\textbf{j},\mathscr{J}(\textbf{r}))$, $i = ivec(\textbf{i},\mathscr{J}(\textbf{c}))$ and $\mathcal{X}^{\mathbb{S}}$ is the $\mathbb{S}$-transpose of $\mathcal{X}$ (see (2.9) in \cite{doi:10.1137/110820609}). In particular, when $z=1$ and $\mathbb{S} = \{n,1:n-1,n+1:N\}$, the tensor unfolding is called the \textit{n-mode matricization}.

\subsection{Einstein Product and Isomorphism}

Here we discuss the notion of \textit{even-order paired tensors} and the \textit{Einstein product} which will play an important role in developing the MLTI systems theory. Similarly, these notions proved to be useful in numerical solutions
of master equations associated with Markov processes on extremely large state spaces \cite{tensortrain}. Even-order paired tensors were originally proposed by Huang and Qi \cite{2017arXiv170504315H} in the context of elasticity tensors in solid mechanics. It turns out that compared to even-order non-paired tensors, the even-order paired tensors are easier for bookkeeping and can be conveniently manipulated using tensor algebra for MLTI systems, see Section \ref{sec:MLTI}.

\begin{Definition}
Given an even-order tensor $\mathcal{A}\in\mathbb{R}^{J_1\times I_1\times \dots \times J_N\times I_N}$, if its indices can be divided into $N$ adjacent blocks $\{j_1i_1\},\dots, \{j_Ni_N\}$, then  $\mathcal{A}$ is called an even-order paired tensor.
\end{Definition}

\begin{Definition}
Given two even-order paired tensors $\mathcal{A}\in\mathbb{R}^{J_1\times K_1\times \dots J_N\times K_N}$ and $\mathcal{B}\in\mathbb{R}^{K_1\times I_1 \times \dots \times K_N\times I_N}$, the \textit{Einstein product} $\mathcal{A}*\mathcal{B}\in \mathbb{R}^{J_1\times I_1 \times \dots \times J_N\times I_N}$ is defined by
\begin{equation}
(\mathcal{A}* \mathcal{B})_{j_1i_1\dots j_N i_N}=\sum_{\textbf{k}= \textbf{1}}^{\mathscr{K}} \mathcal{A}_{j_1k_1\dots j_Nk_N}\mathcal{B}_{k_1i_1\dots k_Ni_N}.\label{eq10}
\end{equation}
If $\mathcal{B}\in\mathbb{R}^{K_1\times K_2\times\dots \times K_N}$, the Einstein product is still valid  by treating $\mathscr{I} = \textbf{1}$ in (\ref{eq10}). Note that the notion of Einstein product is not restricted to even-order paired tensors and can be defined more generally.
\end{Definition}

The above Einstein product computes the summation of two even-order paired tensors over alternating indices. Brazell et al. \cite{doi:10.1137/100804577} investigated properties for even-order non-paired tensors $\mathcal{A}\in\mathbb{R}^{J_1\times\dots \times J_N\times I_1\times\dots\times I_N}$ under the Einstein product through construction of an isomorphism to GL($\mathbb{R}$) (general linear group). The existence of the isomorphism enables one to generalize several matrix concepts, such as orthogonality, invertibility and eigenvalue decomposition to the tensor case \cite{doi:10.1137/100804577,doi.org/10.1016,doi:10.1080/03081087.2018.1500993,doi:10.1080/03081087.2015.1071311,doi:10.1080/03081087.2015.1083933}. We establish an analogous isomorphism  for even-order paired tensors by a permutation of indices.
\begin{Definition}
Define the following transformation $\varphi$: $\mathbb{T}_{J_1I_1\dots J_NI_N}(\mathbb{R}) \rightarrow \mathbb{M}_{|\mathscr{J}|,|\mathscr{I}|}(\mathbb{R})$ with $\varphi(\mathcal{A})=\textbf{A}$ defined component-wise as
\begin{equation}
\mathcal{A}_{j_1i_1\dots j_Ni_N}\xrightarrow{\varphi} \textbf{A}_{ivec(\textbf{j},\mathscr{J})ivec(\textbf{i},\mathscr{I})},\label{eq:12}
\end{equation}
where, $\mathbb{T}_{J_1I_1\dots J_NI_N}(\mathbb{R})$ is the set of all $J_1\times I_1\times \dots \times J_N\times I_N$ even-order paired tensors and $\mathbb{M}_{|\mathscr{J}|,|\mathscr{I}|}(\mathbb{R})$ is the set of all $|\mathscr{J}|\times |\mathscr{I}|$ matrices.
\end{Definition}

The transformation $\varphi$ can be viewed as a tensor unfolding discussed in (\ref{eq20}) with $z= N$ and $\mathbb{S} = \{1,3,\dots,2N-1,2,4,\dots,2N\}$. Brazell et al. prove that $\varphi$ is an isomorphism for fourth-order non-paired tensors, and we extend the key results (Corollary 3.3 in \cite{doi:10.1137/100804577}) for even-order paired tensors of any order.

\begin{corollary}
Suppose that $J_n=I_n$ for all $n$ and $\mathbb{M}_{|\mathscr{J}|,|\mathscr{I}|}(\mathbb{R})=\text{GL}(|\mathscr{J}|,\mathbb{R})$. $\mathbb{T}_{J_1J_1\dots J_NJ_N}(\mathbb{R})$ is a group equipped with the Einstein product, and $\varphi$ is a group isomorphism. Moreover, $\mathbb{T}_{J_1J_1\dots J_NJ_N}(\mathbb{R})$ also forms a tensor ring under addition and the Einstein product, and $\varphi$ is a ring isomorphism.
\end{corollary}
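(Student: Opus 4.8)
The plan is to prove the corollary entirely by \emph{transport of structure} across $\varphi$, so that the group and ring axioms for even-order paired tensors reduce to the already-known group and ring structure of $\mathbb{M}_{|\mathscr{J}|,|\mathscr{J}|}(\mathbb{R})$. The only fact that requires real work is the multiplicative identity $\varphi(\mathcal{A}*\mathcal{B}) = \varphi(\mathcal{A})\varphi(\mathcal{B})$; once this is established alongside bijectivity and additivity of $\varphi$, both assertions follow at once. First I would record that $\varphi$ is a linear bijection. Since $ivec(\cdot,\mathscr{J})$ is a bijection from $[\mathscr{J}]$ onto $\{1,\dots,|\mathscr{J}|\}$ and likewise $ivec(\cdot,\mathscr{I})$ onto $\{1,\dots,|\mathscr{I}|\}$, the component rule (\ref{eq:12}) merely relabels the entries of $\mathcal{A}$ with neither repetition nor omission, so $\varphi$ is a bijection; because this relabeling is entry-wise, $\varphi(\mathcal{A}+\mathcal{B})=\varphi(\mathcal{A})+\varphi(\mathcal{B})$ and $\varphi(c\mathcal{A})=c\varphi(\mathcal{A})$ hold for free.

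Next comes the crux, which I would verify by index chasing. Writing $\textbf{A}=\varphi(\mathcal{A})$ and $\textbf{B}=\varphi(\mathcal{B})$, the $(ivec(\textbf{j},\mathscr{J}),ivec(\textbf{i},\mathscr{I}))$ entry of $\varphi(\mathcal{A}*\mathcal{B})$ equals $\sum_{\textbf{k}=\textbf{1}}^{\mathscr{K}}\mathcal{A}_{j_1k_1\dots j_Nk_N}\mathcal{B}_{k_1i_1\dots k_Ni_N}$ by (\ref{eq10}). Under $\varphi$ the two factors become $\textbf{A}_{ivec(\textbf{j},\mathscr{J})\,ivec(\textbf{k},\mathscr{K})}$ and $\textbf{B}_{ivec(\textbf{k},\mathscr{K})\,ivec(\textbf{i},\mathscr{I})}$, and since $ivec(\cdot,\mathscr{K})$ is a bijection onto $\{1,\dots,|\mathscr{K}|\}$ the multi-index sum over $\textbf{k}$ is exactly the single-index sum $\sum_{m=1}^{|\mathscr{K}|}\textbf{A}_{ivec(\textbf{j},\mathscr{J})\,m}\textbf{B}_{m\,ivec(\textbf{i},\mathscr{I})}$ defining matrix multiplication. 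The step I expect to be the main obstacle is verifying that the composite index built from the contracted block $\textbf{k}$ is the \emph{same} number whether it is read off as a column index of $\textbf{A}$ or as a row index of $\textbf{B}$; this is precisely what the paired layout together with the choice $\mathbb{S}=\{1,3,\dots,2N-1,2,4,\dots,2N\}$ guarantees, since the odd slot of each pair is always gathered into the row index and the even slot into the column index in one fixed order. This consistent arbitrary-order paired bookkeeping is the genuinely new content beyond Brazell et al., who treated only the fourth-order non-paired case.

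Finally I would transport structure. For the ring claim, $\varphi$ is an additive bijection intertwining $*$ with matrix multiplication, so associativity of $*$, the left and right distributive laws, and the existence of a multiplicative identity all pull back from the matrix ring $\mathbb{M}_{|\mathscr{J}|,|\mathscr{J}|}(\mathbb{R})$; the identity is $\mathcal{I}=\varphi^{-1}(\textbf{I})$, explicitly $\mathcal{I}_{j_1i_1\dots j_Ni_N}=\prod_{n=1}^N\delta_{j_ni_n}$, since $ivec$ injectivity gives $\varphi(\mathcal{I})=\textbf{I}$. Hence $\mathbb{T}_{J_1J_1\dots J_NJ_N}(\mathbb{R})$ is a ring and $\varphi$ a ring isomorphism. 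For the group claim, I restrict to those tensors whose image lies in $\text{GL}(|\mathscr{J}|,\mathbb{R})$; closure follows because $\varphi(\mathcal{A}*\mathcal{B})=\textbf{A}\textbf{B}$ is invertible when $\textbf{A},\textbf{B}$ are, the inverse is $\mathcal{A}^{-1}=\varphi^{-1}(\textbf{A}^{-1})$, and the identity is again $\mathcal{I}$. Thus this set is a group and $\varphi$ restricts to a group isomorphism onto $\text{GL}(|\mathscr{J}|,\mathbb{R})$.
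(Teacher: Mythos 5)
Your proof is correct and follows essentially the same route the paper intends: the paper omits a printed proof and simply presents the corollary as the extension of Brazell et al.'s Corollary 3.3, which rests on exactly your argument --- viewing $\varphi$ as the unfolding with $z=N$ and $\mathbb{S}=\{1,3,\dots,2N-1,2,4,\dots,2N\}$, checking it is a linear bijection satisfying $\varphi(\mathcal{A}*\mathcal{B})=\varphi(\mathcal{A})\varphi(\mathcal{B})$ via the $ivec$ relabeling of the contracted block $\textbf{k}$, and transporting the group and ring structure from $\text{GL}(|\mathscr{J}|,\mathbb{R})$ and $\mathbb{M}_{|\mathscr{J}|,|\mathscr{J}|}(\mathbb{R})$. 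Your explicit index chase for multiplicativity, your identification of the U-identity $\mathcal{I}_{j_1i_1\dots j_Ni_N}=\prod_{n=1}^N\delta_{j_ni_n}$ with $\varphi^{-1}(\textbf{I})$, and your restriction to the preimage of $\text{GL}(|\mathscr{J}|,\mathbb{R})$ for the group claim (which resolves the statement's slight looseness in calling the same set both a group and a ring) supply precisely the details the paper leaves implicit.
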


For an even-order paired tensor $\mathcal{A} \in \mathbb{R}^{J_1\times I_1 \times \dots \times J_N\times I_N}$,  $\mathcal{T} \in \mathbb{R}^{I_1\times J_1 \times \dots \times I_N\times J_N}$ is called the \textit{U-transpose} of $\mathcal{A}$ if $\mathcal{T}_{i_1j_1\dots i_Nj_N} = \mathcal{A}_{j_1i_1\dots j_Ni_N}$ and is denoted by $\mathcal{A}^{\top}$. We refer to an even-order paired tensor that is identical to its U-transpose as \textit{weakly symmetric}. An even-order ``square'' tensor $\mathcal{D} \in \mathbb{R}^{J_1\times J_1 \times \dots \times J_N\times J_N}$ is called the \textit{U-diagonal} tensor if all its entries are zeros except for $\mathcal{D}_{j_1j_1\dots j_Nj_N}$. In particular, if all the diagonal entires $\mathcal{D}_{j_1j_1\dots j_Nj_N}=1$, then $\mathcal{D}$ is the \textit{U-identity} tensor, denoted by $\mathcal{I}$. An even-order square tensor $\mathcal{U}\in  \mathbb{R}^{J_1\times J_1 \times \dots \times J_N\times J_N}$ is \textit{U-orthogonal} if $\mathcal{U}*\mathcal{U}^{\top}=\mathcal{U}^{\top}*\mathcal{U}=\mathcal{I}$. Furthermore, for an even-order square tensor $\mathcal{A}\in \mathbb{R}^{J_1\times J_1 \times \dots \times J_N\times J_N}$, if there exists a tensor $\mathcal{X}\in\mathbb{R}^{J_1\times J_1 \times \dots \times J_N\times J_N}$ such that $\mathcal{A}*\mathcal{X}=\mathcal{X}*\mathcal{A} = \mathcal{I}$, then $\mathcal{X}$ is called the \textit{U-inverse} of $\mathcal{A}$, denoted by $\mathcal{A}^{-1}$. ``U'' stands for ``unfolding'' in all the definitions. Besides the properties discussed above, we can define \textit{U-positive definiteness} for even-order square tensors, which is similarly discussed in \cite{tensortrain}.
\begin{Definition}
An even-order square tensor $\mathcal{A} \in \mathbb{R}^{J_1\times J_1 \times \dots \times J_N\times J_N}$ is U-positive definite if its corresponding homogeneous polynomial
\begin{equation}
h(\mathcal{X})=\mathcal{X}^{\top}*\mathcal{A}*\mathcal{X} > 0 \label{eq13}
\end{equation}
for all $\mathcal{X}\neq\mathcal{O} \in \mathbb{R}^{J_1\times J_2\times\dots \times J_N}$, where $\mathcal{O}$ denotes the zero tensor.
\end{Definition}

It is straightforward to show that an even-order square tensor $\mathcal{A}$ is U-positive definite if and only if $\varphi(\mathcal{A})$ is positive definite. Moreover, U-positive definiteness implies U-invertibility of even-order square tensors from the isomorphism property.

The notions of linear dependence and independence for tensor spaces are defined in a similar way to those for  vector spaces. Let $\mathbb{T}_{J_1J_2\dots J_N}(\mathbb{R})$ be a set of all tensors $\mathcal{X}\in \mathbb{R}^{J_1\times J_2\times  \dots \times J_N}$. A basis $\mathscr{B}$ of the tensor space $\mathbb{T}_{J_1J_2\dots J_N}(\mathbb{R})$ is a linearly independent subset of $\mathbb{T}_{J_1J_2\dots J_N}(\mathbb{R})$ that spans $\mathbb{T}_{J_1J_2\dots J_N}(\mathbb{R})$. Additionally, $\text{dim} \big (\mathbb{T}_{J_1J_2\dots J_N}(\mathbb{R})\big)$ is equal to the cardinality of the basis $\mathscr{B}$. Ji et al. \cite{doi.org/10.1016} propose the null space $N(\mathcal{A})$ and the range $R(\mathcal{A})$ of an even-order non-paired tensor $\mathcal{A}$, and establish some elementary properties. Analogous definitions for even-order paired tensors are as follows:

\begin{Definition}
Define the \textit{null space} and \textit{range} of an even-order paired tensor $\mathcal{A}\in \mathbb{R}^{J_1\times I_1\times \dots \times J_N\times I_N}$ to be
\begin{equation}
\begin{split}
N(\mathcal{A})&=\{\mathcal{X}\in \mathbb{R}^{I_1\times I_2\times \dots \times I_N}:\mathcal{A}*\mathcal{X}=\mathcal{O}\}, \\R(\mathcal{A})&=\{\mathcal{A}*\mathcal{X}:\mathcal{X}\in \mathbb{R}^{I_1\times I_2\times \dots \times I_N}\},
\end{split}
\end{equation}
respectively. Moreover, define nullity$_U(\mathcal{A}$)=dim\big($N(\mathcal{A})$\big) and rank$_U(\mathcal{A}$)=dim\big($R(\mathcal{A})$\big).
\end{Definition}

We find that above notion of rank$_U(\mathcal{A}$) is equivalent to the \textit{unfolding rank} defined by Liang et al. \cite{doi:10.1080/03081087.2018.1500993}.

\begin{proposition}\label{prop3}
Let  $\mathcal{A} \in \mathbb{R}^{J_1\times I_1 \times \dots \times J_N\times I_N}$ be an even-order paired tensor. Then
\begin{align}
\text{rank}_U(\mathcal{A})=\text{rank}\big(\varphi(\mathcal{A})\big).
\end{align}
\end{proposition}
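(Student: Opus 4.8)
The plan is to transport the entire computation through the unfolding map $\varphi$, turning the tensor range $R(\mathcal{A})$ into the ordinary column space of the matrix $\varphi(\mathcal{A})$, and then to invoke the fact that $\varphi$ is a dimension-preserving linear isomorphism. First I would record that $\varphi$, read off its component-wise definition (\ref{eq:12}), is $\mathbb{R}$-linear and bijective, since it merely relabels the entries of a tensor through the index bijections $ivec(\cdot,\mathscr{J})$ and $ivec(\cdot,\mathscr{I})$. In particular, its restriction to the tensors $\mathcal{X}\in\mathbb{R}^{I_1\times\dots\times I_N}$, viewed as paired tensors with trivial second indices so that $\mathscr{I}=\textbf{1}$ in the codomain, is precisely the vectorization $\mathcal{X}\mapsto\varphi(\mathcal{X})\in\mathbb{R}^{|\mathscr{I}|}$, and it is a linear isomorphism onto $\mathbb{R}^{|\mathscr{I}|}$.

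The key step, and the one I expect to be the main obstacle, is to establish the multiplicativity identity $\varphi(\mathcal{A}*\mathcal{X})=\varphi(\mathcal{A})\,\varphi(\mathcal{X})$ for every $\mathcal{X}\in\mathbb{R}^{I_1\times\dots\times I_N}$. This is the rectangular analogue of the homomorphism property underlying the Corollary, which is stated only for square tensors; I would prove it directly from the definition of the Einstein product (\ref{eq10}) and of $\varphi$ by matching the summation $\sum_{\textbf{k}}\mathcal{A}_{j_1k_1\dots j_Nk_N}\mathcal{X}_{k_1\dots k_N}$ with the $ivec(\textbf{j},\mathscr{J})$-th entry of the matrix-vector product $\varphi(\mathcal{A})\,\varphi(\mathcal{X})$. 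The verification reduces to checking that the contracted index $\textbf{k}$ is summed over the same range $\{1,\dots,|\mathscr{K}|\}$ on both sides, which holds because $ivec(\cdot,\mathscr{K})$ is a bijection from $[\mathscr{K}]$ onto $\{1,\dots,|\mathscr{K}|\}$; the only real care needed is the bookkeeping of the alternating paired-index ordering built into $\varphi$.

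Granting this identity, the proof concludes quickly. The set $R(\mathcal{A})=\{\mathcal{A}*\mathcal{X}:\mathcal{X}\in\mathbb{R}^{I_1\times\dots\times I_N}\}$ is the image of the linear map $\mathcal{X}\mapsto\mathcal{A}*\mathcal{X}$, hence a subspace of $\mathbb{R}^{J_1\times\dots\times J_N}$. Applying $\varphi$, and using multiplicativity together with the surjectivity of $\mathcal{X}\mapsto\varphi(\mathcal{X})$ onto $\mathbb{R}^{|\mathscr{I}|}$, gives
\[
\varphi\big(R(\mathcal{A})\big)=\{\varphi(\mathcal{A})\,v:v\in\mathbb{R}^{|\mathscr{I}|}\}=\text{Col}\big(\varphi(\mathcal{A})\big),
\]
the column space of $\varphi(\mathcal{A})$. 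Finally, since $\varphi$ is a linear isomorphism it carries subspaces to subspaces of equal dimension, so
\[
\text{rank}_U(\mathcal{A})=\dim R(\mathcal{A})=\dim\text{Col}\big(\varphi(\mathcal{A})\big)=\text{rank}\big(\varphi(\mathcal{A})\big),
\]
which is the claim. I would remark in passing that the same argument applied to $N(\mathcal{A})$ yields $\text{nullity}_U(\mathcal{A})=\dim\ker\varphi(\mathcal{A})$, giving a tensor rank--nullity relation as a free corollary.
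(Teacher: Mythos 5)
Your proof is correct and follows essentially the same route the paper intends: the paper's one-line proof sketch (``based on the definitions of the unfolding rank and the bases in tensor spaces'') amounts to exactly your argument that $\varphi$ is a dimension-preserving linear isomorphism carrying $R(\mathcal{A})$ onto the column space of $\varphi(\mathcal{A})$. Your explicit verification of the multiplicativity identity $\varphi(\mathcal{A}*\mathcal{X})=\varphi(\mathcal{A})\,\varphi(\mathcal{X})$ for non-square (rectangular) shapes, with $\mathscr{I}=\textbf{1}$ handling the vectorized state, is precisely the detail the paper leaves implicit, so your write-up is a faithful completion rather than a different method.
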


The proof is based on the definitions of the unfolding rank and the bases in tensor spaces as discussed above. Hence, we also refer to the rank$_U(\mathcal{A}$) as the unfolding rank in this paper. In addition, Liang et al. \cite{doi:10.1080/03081087.2018.1500993} propose the \textit{unfolding determinant}, $\text{det}_U$, for even-order non-paired tensors. Similar definition/results can be extended for even-order paired tensors.

\subsection{Block Tensors}
Analogously to  block matrices, one can define the notion of block tensors. The block tensors introduced by Sun et al. \cite{doi:10.1080/03081087.2015.1071311} for even-order non-paired tensors have the limitation of introducing too many zeros into the tensor and increasing its size. For tensors of the same size, we explore a new block tensor construction that does not introduce any wasteful zeros, and thus could offer computational advantage.

\begin{Definition}\label{def:nmodedef}
Let  $\mathcal{A},\mathcal{B}\in\mathbb{R}^{J_1\times I_1\times \dots \times J_N\times I_N}$ be two even-order paired tensors of the same size. Then the \textit{$n$-mode row block tensor} is defined to be $\begin{Vmatrix}
	\mathcal{A} & \mathcal{B}
\end{Vmatrix}_n\in \mathbb{R}^{J_1\times I_1 \times \dots \times J_n\times 2I_n \times\dots \times J_N\times I_N}$ such that
\begin{equation}
(\begin{Vmatrix}
	\mathcal{A} & \mathcal{B}
\end{Vmatrix}_n)_{j_1l_1\dots j_Nl_N}=
\begin{cases}
\mathcal{A}_{j_1l_1\dots j_Nl_N}, \textbf{j}\in[\mathscr{J}], \text{ } \textbf{l}\in[\mathscr{I}]\\
\mathcal{B}_{j_1l_1\dots j_Nl_N}, \textbf{j}\in[\mathscr{J}], \text{ } \textbf{l}\in[\mathscr{L}],
\end{cases}
\end{equation}
where, $\mathscr{L}=\mathscr{I}$ except $l_n=I_n+1,I_n+2,\dots,2I_n$.
\end{Definition}

The \textit{$n$-mode column block tensor} $\begin{Vmatrix}
	\mathcal{A} &
	\mathcal{B}
\end{Vmatrix}_n^{\top}\in \mathbb{R}^{J_1\times I_1 \times \dots \times 2J_n\times I_n \times\dots \times J_N\times I_N}$ can be defined in a similar manner. However, the blocks of even-order paired tensors usually do not map to contiguous blocks in their unfolding  \cite{doi:10.1137/110820609}, and sometimes that may increase the complexity in computations. Ragnarsson et al. \cite{doi:10.1137/110820609} show that there exists a row permutation matrix $\textbf{Q}$ and a column permutation matrix $\textbf{P}$ such that the blocks of a tensor $\mathcal{A}$ can be mapped to contiguous blocks in the unfolding $\textbf{Q}\textbf{A}_{\textbf{r}\times \textbf{c}}\textbf{P}$. The following proposition shows that for $n$-mode row block tensors, only column permutations are required.
\begin{proposition}\label{pro6}
Let  $\mathcal{A},\mathcal{B}\in\mathbb{R}^{J_1\times I_1\times \dots \times J_N\times I_N}$ be two even-order paired tensors. Then
\begin{equation}
\varphi(\begin{Vmatrix}
	\mathcal{A} & \mathcal{B}
\end{Vmatrix}_n) =
\begin{bmatrix}
	\varphi(\mathcal{A}) & \varphi(\mathcal{B})
\end{bmatrix}\textbf{P},
\end{equation}
where, $\textbf{P}$ is a permutation matrix. In particular, when $\mathscr{I} = \textbf{1}$ or $n=N$, $\textbf{P}$ is the identity matrix.
\end{proposition}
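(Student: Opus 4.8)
The plan is to compare the two matrices entry-by-entry by tracking how the index map $ivec$ acts separately on rows and columns. The essential structural observation is that the $n$-mode row block construction only enlarges the $n$-th input mode, replacing $I_n$ by $2I_n$, while leaving every output mode in $\mathscr{J}=\{J_1,\dots,J_N\}$ untouched. Since $\varphi$ assigns rows via $ivec(\textbf{j},\mathscr{J})$ and columns via the input indices, the row-index set of $\varphi(\begin{Vmatrix}\mathcal{A} & \mathcal{B}\end{Vmatrix}_n)$ is literally identical to that of $\varphi(\mathcal{A})$ and $\varphi(\mathcal{B})$. This is precisely why no row permutation $\textbf{Q}$ is needed here, in contrast to the general two-sided permutation of Ragnarsson et al.

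First I would set up the column bijection. Writing $\mathscr{L}'$ for the size set of the block tensor (equal to $\mathscr{I}$ except $L'_n=2I_n$), each admissible column multi-index $\textbf{l}\in[\mathscr{L}']$ splits into two cases: when $l_n\le I_n$ it names an $\mathcal{A}$-column at $\textbf{i}=\textbf{l}$, and when $l_n>I_n$ it names a $\mathcal{B}$-column at the shifted index $i_n=l_n-I_n$ with all other components unchanged. By the case definition of the block tensor, the corresponding column of $\varphi(\begin{Vmatrix}\mathcal{A} & \mathcal{B}\end{Vmatrix}_n)$ coincides with the column $ivec(\textbf{i},\mathscr{I})$ of $\varphi(\mathcal{A})$ or of $\varphi(\mathcal{B})$, respectively. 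Thus the columns of the block unfolding are exactly the columns of $\begin{bmatrix}\varphi(\mathcal{A}) & \varphi(\mathcal{B})\end{bmatrix}$, merely listed in a different order dictated by $ivec(\textbf{l},\mathscr{L}')$; the change-of-order map is the sought permutation matrix $\textbf{P}$.

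To pin down $\textbf{P}$ and the special cases, I would expand $ivec(\textbf{l},\mathscr{L}')$ from its definition. For modes $k\le n$ the partial products $\prod_{m=1}^{k-1}L'_m$ agree with $\prod_{m=1}^{k-1}I_m$, whereas for $k>n$ they acquire an extra factor of $2$; this doubling of the high-mode place values is what interleaves the $\mathcal{A}$- and $\mathcal{B}$-columns and forces $\textbf{P}$ to be nontrivial in general. When $n=N$ no mode has $k>n$, so the expansion yields $ivec(\textbf{l},\mathscr{L}')=ivec(\textbf{l},\mathscr{I})$ on the $\mathcal{A}$-block and $ivec(\textbf{i},\mathscr{I})+|\mathscr{I}|$ on the $\mathcal{B}$-block, which is precisely the column order of the concatenation, giving $\textbf{P}=\textbf{I}$. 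When $\mathscr{I}=\textbf{1}$ the column set reduces to the two elements $\{1,2\}$ already in the correct order, again yielding the identity.

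The main obstacle I anticipate is purely bookkeeping: verifying that the two-case column bijection is well defined and that $ivec(\textbf{l},\mathscr{L}')$ restricted to each block reproduces the $\mathcal{A}$- and $\mathcal{B}$-column indices up to the global shift, with no off-by-one error in the shifted index $l_n-I_n$. No deep algebraic difficulty arises once the row/column separation is recognized; the real content of the proposition is the clean statement that, under $\varphi$, enlarging an \emph{input} mode is a column-only operation.
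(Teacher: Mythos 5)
Your proposal is correct and follows essentially the same route as the paper, whose proof is a single sentence asserting that the result ``follows immediately from the definition of the index mapping function $ivec(\textbf{i},\mathscr{I})$ and $n$-mode row block tensors.'' You have simply carried out that bookkeeping in full: the observation that the rows are untouched (so no row permutation $\textbf{Q}$ is needed), the column bijection via the case split $l_n\le I_n$ versus $l_n>I_n$ with the shift $i_n=l_n-I_n$, the doubling of the place values $\prod_{m=1}^{k-1}L'_m$ only for $k>n$, and the verification that $n=N$ and $\mathscr{I}=\textbf{1}$ give $\textbf{P}=\textbf{I}$ are exactly the details the paper leaves implicit.
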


The proof follows immediately from the definition of index mapping function $ivec(\textbf{i},\mathscr{I})$ and $n$-mode row block tensors. Based on Ragnarsson et al.'s results, it follows that the blocks of $n$-mode column block tensors map to contiguous blocks in its unfolding up to some row permutations. Moreover, Proposition \ref{pro6} helps us to establish the relation between unfolding rank and matrix rank for block tensors and their unfolding.

\begin{corollary}\label{coro1}
Let $\mathcal{A},\mathcal{B}\in\mathbb{R}^{J_1\times I_1\times \dots \times J_N\times I_N}$ be  two even-order paired tensors. Then
\begin{align}
\text{rank}_U(\begin{Vmatrix}
	\mathcal{A} & \mathcal{B}
\end{Vmatrix}_n) = \text{rank}(\begin{bmatrix}
	\varphi(\mathcal{A}) & \varphi(\mathcal{B})
\end{bmatrix}).
\end{align}
\end{corollary}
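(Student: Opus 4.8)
The plan is to chain together the two preceding results, Proposition~\ref{prop3} and Proposition~\ref{pro6}, with the elementary observation that right-multiplication by a permutation matrix preserves matrix rank. Since all three ingredients are already in hand, the argument is short.

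First I would note that the $n$-mode row block tensor $\begin{Vmatrix}\mathcal{A} & \mathcal{B}\end{Vmatrix}_n$ is itself an even-order paired tensor, living in $\mathbb{R}^{J_1\times I_1\times\dots\times J_n\times 2I_n\times\dots\times J_N\times I_N}$, so that the unfolding map $\varphi$ and hence Proposition~\ref{prop3} apply to it verbatim. Applying Proposition~\ref{prop3} gives $\text{rank}_U(\begin{Vmatrix}\mathcal{A} & \mathcal{B}\end{Vmatrix}_n) = \text{rank}\big(\varphi(\begin{Vmatrix}\mathcal{A} & \mathcal{B}\end{Vmatrix}_n)\big)$.

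Next I would invoke Proposition~\ref{pro6} to express this unfolding as $\varphi(\begin{Vmatrix}\mathcal{A} & \mathcal{B}\end{Vmatrix}_n) = \begin{bmatrix}\varphi(\mathcal{A}) & \varphi(\mathcal{B})\end{bmatrix}\textbf{P}$ for a suitable permutation matrix $\textbf{P}$. Because $\textbf{P}$ is invertible, right-multiplication by it merely permutes columns and therefore does not change the rank, so $\text{rank}\big(\begin{bmatrix}\varphi(\mathcal{A}) & \varphi(\mathcal{B})\end{bmatrix}\textbf{P}\big) = \text{rank}\big(\begin{bmatrix}\varphi(\mathcal{A}) & \varphi(\mathcal{B})\end{bmatrix}\big)$. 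Stringing the three equalities together produces the desired identity.

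There is no real obstacle here, since the corollary is an immediate consequence of the established propositions; the only points meriting a moment's care are verifying that $\begin{Vmatrix}\mathcal{A} & \mathcal{B}\end{Vmatrix}_n$ indeed belongs to the domain of $\varphi$ (an even-order paired tensor, with the doubled mode $2I_n$ still correctly paired), so that Proposition~\ref{prop3} is legitimately applicable, and recalling that the permutation $\textbf{P}$ furnished by Proposition~\ref{pro6} is invertible and hence rank-preserving. Both checks are routine.
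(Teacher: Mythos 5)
Your argument is correct and follows exactly the route the paper intends: Proposition~\ref{prop3} applied to the block tensor (which, as you verify, is itself an even-order paired tensor in the domain of $\varphi$), combined with Proposition~\ref{pro6} and the rank-invariance of right-multiplication by the permutation matrix $\textbf{P}$. The paper leaves this chain implicit, noting only that Proposition~\ref{pro6} ``helps us to establish'' the relation, and your write-up supplies precisely that reasoning with no gaps.
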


Lastly, we generalize the $n$-mode block tensors for multiple blocks. Given $K$ even-order paired tensors $\mathcal{X}_n\in\mathbb{R}^{J_1\times I_1\times \dots\times J_N\times I_N}$, one can apply Definition \ref{def:nmodedef} successively to create a $J_1\times I_1\times \dots \times J_n\times I_nK  \times \dots \times J_N\times I_N$ even-order $n$-mode row block tensor. We define a more general concatenation approach as follows:

\begin{Definition}
Given $K$ even-order paired tensors $\mathcal{X}_n\in\mathbb{R}^{J_1\times I_1\times \dots\times J_N\times I_N}$, if $K=K_1K_2\dots K_N$, the $J_1\times I_1K_1\times \dots \times J_N\times I_NK_N$ even-order mode row block tensor $\mathcal{Y}$ can be constructed in the following way:
\begin{itemize}
 \item Compute the 1-mode row block tensor concatenation of the sets $\{\mathcal{X}_1,\cdots,\mathcal{X}_{K_1}\}$, $\{\mathcal{X}_{K_1+1},\cdots,\mathcal{X}_{2K_1}\}$ and so on to obtain $K_2K_3\dots K_N$ block tensors denoted by $\mathcal{X}_1^{(1)},\mathcal{X}_2^{(1)},\dots,\mathcal{X}_{K_2K_3\dots K_N}^{(1)}$;
 \item Compute the 2-mode row block tensors concatenation of the sets $\{\mathcal{X}_1^{(1)},\cdots,\mathcal{X}_{K_2}^{(1)}\}$, $\{\mathcal{X}_{K_2+1}^{(1)},\cdots,\mathcal{X}_{2K_2}^{(1)}\}$ and so on to obtain $K_3K_4\dots K_N$ block tensors denoted by $\mathcal{X}_1^{(2)},\mathcal{X}_2^{(2)},\dots,  \mathcal{X}_{K_3K_4\dots K_N}^{(2)}$;
 \item Keep repeating the process until the last $N$-mode row block tensor is obtained.
 \end{itemize}
  We denote the mode row block tensor as $\mathcal{Y} = \begin{Vmatrix} \mathcal{X}_1 &\mathcal{X}_2 & \dots & \mathcal{X}_{K}\end{Vmatrix}$.
\end{Definition}

The generalized \textit{mode column block tensors} with multiple blocks can be constructed in a similar manner. Both Proposition \ref{pro6} and Corollary \ref{coro1} hold for the general mode block tensor construction defined above. In particular, when $\mathscr{I}=\textbf{1}$, the above generalized mode row block tensor maps exactly to contiguous blocks in its unfolding under $\varphi$ which could be beneficial in many block tensor applications, e.g. see next subsection.

\subsection{Tensor Eigenvalue Decomposition}
 Eigenvalue problems for higher-order tensors were first explored by Qi \cite{QI20051302} and Lim \cite{singularvaluetensor} independently in 2005. Brazell et al. \cite{doi:10.1137/100804577} formulated a new tensor eigenvalue problem through the isomorphism $\varphi$ for fourth-order non-paired tensors, and  Cui et al. \cite{doi:10.1080/03081087.2015.1083933} extended the result to general even-order tensors. Note that Tensor Eigenvalue Decomposition (TEVD) derived from the unfolding  transformation is distinct from other notions of tensor decompositions, such as the Candecomp/Parafac decomposition (CP decomposition) or the Tucker decomposition. We exploit our new notion of mode block tensors to express the full tensor eigenvalue decomposition of an even-order paired tensor.
\begin{Definition}
Let $\mathcal{A}\in\mathbb{R}^{J_1\times J_1\times \dots \times J_N\times J_N}$ be an even-order square tensor. If $\mathcal{X}\in\mathbb{C}^{J_1\times J_2\dots\times J_N}$ is a nonzero $N$-th order tensor, $\lambda\in \mathbb{C}$, and $\mathcal{X}$ and $\lambda$ satisfy
\begin{equation}
\mathcal{A}*\mathcal{X}=\lambda\mathcal{X},
\end{equation}
then we call $\lambda$ and $\mathcal{X}$ as the U-eigenvalue and U-eigentensor of $\mathcal{A}$, respectively. Moreover, the tensor eigenvalue decomposition is given by
\begin{equation}
\mathcal{A} = \mathcal{V} *\mathcal{D}* \mathcal{V}^{-1} \label{eq216},
\end{equation}
where, $\mathcal{D}$ is an U-diagonal tensor with U-eigenvalues on its diagonal, and $\mathcal{V}\in\mathbb{R}^{J_1\times J_1\times \dots \times J_N\times J_N}$ is a mode row block tensors consisting of all the U-eigentensors, i.e.
$\mathcal{V} = \begin{Vmatrix} \mathcal{X}_1 & \mathcal{X}_2&\dots & \mathcal{X}_{|\mathscr{J}|}\end{Vmatrix}$. Here we have chosen $K_1=J_1,\cdots,K_n=J_n$ in applying the mode row block tensor operation which enables to express the TEVD in the form (\ref{eq216}) analogous to the matrix case.
\end{Definition}

The algebraic and geometric multiplicity of tensor eigenvalues can be defined as for matrices. The following provides a generalization of the Caley-Hamilton theorem for the tensor case.
\begin{lemma}\label{lem3}
If $\mathcal{A} \in\mathbb{R}^{J_1\times J_1\times \dots \times J_N\times J_N}$ is an even-order square tensor, then $\mathcal{A}$ satisfies its own characteristic polynomial $p(\lambda) = \text{det}_U(\lambda \mathcal{I}-\mathcal{A})$, i.e. $p(\mathcal{A}) = \mathcal{O}$.
\end{lemma}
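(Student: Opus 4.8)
The plan is to transport the entire statement across the ring isomorphism $\varphi$, reduce to the classical Cayley--Hamilton theorem for the matrix $\mathbf{A} = \varphi(\mathcal{A})$, and then pull the conclusion back using injectivity of $\varphi$. Since the Corollary above asserts that $\varphi$ is simultaneously a ring and group isomorphism from $\mathbb{T}_{J_1J_1\dots J_NJ_N}(\mathbb{R})$ onto the matrix algebra, it intertwines the Einstein product with matrix multiplication and sends the U-identity $\mathcal{I}$ to the identity matrix $\mathbf{I}$ and the zero tensor $\mathcal{O}$ to the zero matrix; these are exactly the structural facts the argument needs.

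First I would check that $\varphi$ carries the tensor characteristic polynomial to the matrix characteristic polynomial. By linearity of $\varphi$ together with $\varphi(\mathcal{I}) = \mathbf{I}$ we have $\varphi(\lambda\mathcal{I} - \mathcal{A}) = \lambda\mathbf{I} - \mathbf{A}$, and by the extension of Liang et al.'s unfolding determinant to even-order paired tensors (so that $\text{det}_U(\mathcal{B}) = \det(\varphi(\mathcal{B}))$ for every even-order square $\mathcal{B}$) it follows that $p(\lambda) = \text{det}_U(\lambda\mathcal{I} - \mathcal{A}) = \det(\lambda\mathbf{I} - \mathbf{A})$; that is, $p$ is precisely the characteristic polynomial of the matrix $\mathbf{A}$. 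Next I would evaluate $p$ on the tensor side. Because $\varphi$ preserves the Einstein product, each Einstein power satisfies $\varphi(\mathcal{A}^{*k}) = \mathbf{A}^{k}$, and by linearity (with the constant term of $p$ multiplying $\mathcal{I}$ on the tensor side and $\mathbf{I}$ on the matrix side) we obtain $\varphi(p(\mathcal{A})) = p(\mathbf{A})$. The classical Cayley--Hamilton theorem gives $p(\mathbf{A}) = \mathbf{0}$, whence $\varphi(p(\mathcal{A})) = \mathbf{0} = \varphi(\mathcal{O})$, and injectivity of $\varphi$ yields $p(\mathcal{A}) = \mathcal{O}$, as claimed.

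The single point that requires real care --- and the only place where the argument is not purely formal --- is the identity $\text{det}_U(\mathcal{B}) = \det(\varphi(\mathcal{B}))$, which guarantees that the tensor characteristic polynomial defined through $\text{det}_U$ agrees coefficient-by-coefficient with the matrix characteristic polynomial of $\mathbf{A}$. This hinges on the promised paired-tensor analogue of Liang et al.'s unfolding determinant and on confirming that $\varphi$ intertwines $\text{det}_U$ with $\det$. Once this compatibility is in place, the remainder is a direct consequence of the ring-isomorphism property: one must only be careful that polynomial evaluation on the tensor side is taken with respect to the Einstein product and with $\mathcal{I}$ as the multiplicative unit, which is exactly the algebraic structure $\varphi$ preserves.
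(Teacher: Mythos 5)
Your proposal is correct and is exactly the paper's argument: the paper's proof is the one-line ``follows immediately by applying $\varphi$,'' and your write-up simply makes explicit the steps that remark compresses --- $\varphi(\lambda\mathcal{I}-\mathcal{A})=\lambda\mathbf{I}-\mathbf{A}$, the compatibility $\text{det}_U(\mathcal{B})=\det\big(\varphi(\mathcal{B})\big)$ from the paired-tensor extension of the unfolding determinant, $\varphi\big(p(\mathcal{A})\big)=p\big(\varphi(\mathcal{A})\big)$ via the ring-isomorphism property, classical Cayley--Hamilton, and injectivity of $\varphi$. You also correctly flag the determinant-compatibility identity as the only nontrivial ingredient, which the paper likewise leans on implicitly when it asserts that Liang et al.'s $\text{det}_U$ extends to even-order paired tensors.
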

\begin{proof}
The proof follows immediately by applying $\varphi$.
\end{proof}

\section{MLTI Systems Theory}\label{sec:MLTI}
In order to describe the evolution of tensor time series, the authors in \cite{7798500,rogers_2013} introduced a MLTI system involving the Tucker product as follows,
\begin{align}
\begin{cases}
\mathcal{X}_{t+1}&=\mathcal{X}_{t}\times\{\textbf{A}_1,\dots,\textbf{A}_N\}+\mathcal{U}_t\times\{\textbf{B}_1,\dots,\textbf{B}_N\}\\
\mathcal{Y}_{t}&=\mathcal{X}_{t}\times\{\textbf{C}_1,\dots,\textbf{C}_N\}
\end{cases},\label{eq9}
\end{align}
where, $\mathcal{X}_{t}\in\mathbb{R}^{J_1\times J_2\times \dots \times J_N}$ is the latent state space tensor, $\mathcal{Y}_{t}\in\mathbb{R}^{I_1\times I_2\times \dots \times I_N}$ is the output tensor, and $\mathcal{U}_t\in \mathbb{R}^{K_1\times K_2\times \dots \times K_N}$ is a control tensor. $\textbf{A}_n\in \mathbb{R}^{J_n\times J_n}$, $\textbf{B}_n\in\mathbb{R}^{J_n\times K_n}$ and $\textbf{C}_n\in \mathbb{R}^{I_n\times J_n}$ are real valued matrices for $n=1,2,\dots,N$. The Tucker product provides a suitable way to deal with MLTI systems because it allows one to exploit matrix computations. In particular, using the Kronecker product, one can transform the system (\ref{eq9}) into a standard LTI system \cite{rogers_2013} and then apply the standard LTI systems concepts for analysis. However, this representation is limited by several factors. Firstly, the set of multilinear operators resulting from the component matrices $\textbf{A}_n$, $\textbf{B}_n$ and $\textbf{C}_n$ consists of a special case and does not capture the  more general multilinear evolution of tensor dynamics (see system (\ref{eq11})). Secondly, once transformed into an LTI system via the Kronecker product, there is no unique way to recover the original tensor based representation. Thus, one loses the inherent tensor algebraic structure which otherwise could be exploited to develop system theoretic concepts such as reachability and observability Gramians more naturally. 

We find that (\ref{eq9}) can be replaced by a more general representation using the notion of even-order paired tensors and the Einstein product.
\begin{Definition}
A more general representation of MLTI system is given by
\begin{align}
\begin{cases}
\mathcal{X}_{t+1}&=\mathcal{A}*\mathcal{X}_{t}+\mathcal{B}*\mathcal{U}_{t}\\
\mathcal{Y}_{t}&=\mathcal{C}*\mathcal{X}_{t}
\end{cases},\label{eq11}
\end{align}
where, $\mathcal{A}\in\mathbb{R}^{J_1\times J_1\times \dots \times J_N\times J_N}$, $\mathcal{B}\in\mathbb{R}^{J_1\times K_1\times \dots \times J_N\times K_N}$ and $\mathcal{C}\in\mathbb{R}^{I_1\times J_1\times \dots \times I_N\times J_N}$ are even-order paired tensors.
\end{Definition}

\begin{proposition}
The governing equations (\ref{eq11}) can be obtained from (\ref{eq9}) by setting $\mathcal{A}$, $\mathcal{B}$ and $\mathcal{C}$ to be the outer products of component matrices $\{\textbf{A}_1,\textbf{A}_2,\dots,\textbf{A}_N\}$, $\{\textbf{B}_1,\textbf{B}_2,\dots,\textbf{B}_N\}$ and $\{\textbf{C}_1,\textbf{C}_2,\dots,\textbf{C}_N\}$ respectively.
\end{proposition}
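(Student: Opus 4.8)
The plan is to verify the equivalence purely at the level of tensor components, reducing the claim to three elementary index identities---one each for $\mathcal{A}$, $\mathcal{B}$, $\mathcal{C}$---and then observing that these force the two state equations and the two output equations of (\ref{eq9}) and (\ref{eq11}) to agree termwise. The central observation is that forming the outer product of the component matrices already produces a tensor in the correct paired-index layout: by (\ref{eq99}), the entries of $\mathcal{A}=\textbf{A}_1\circ\textbf{A}_2\circ\cdots\circ\textbf{A}_N$ are $\mathcal{A}_{j_1i_1\dots j_Ni_N}=(\textbf{A}_1)_{j_1i_1}(\textbf{A}_2)_{j_2i_2}\cdots(\textbf{A}_N)_{j_Ni_N}$, so the associativity of $\circ$ automatically interleaves the indices into the adjacent blocks $\{j_ni_n\}$ required of an even-order paired tensor---no extra permutation is needed. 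The dimensions also line up: since $\textbf{A}_n\in\mathbb{R}^{J_n\times J_n}$, $\textbf{B}_n\in\mathbb{R}^{J_n\times K_n}$ and $\textbf{C}_n\in\mathbb{R}^{I_n\times J_n}$, the resulting tensors live in $\mathbb{R}^{J_1\times J_1\times\cdots}$, $\mathbb{R}^{J_1\times K_1\times\cdots}$ and $\mathbb{R}^{I_1\times J_1\times\cdots}$, exactly the spaces demanded in (\ref{eq11}).

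First I would evaluate the Einstein product $\mathcal{A}*\mathcal{X}_t$. Since $\mathcal{X}_t\in\mathbb{R}^{J_1\times J_2\times\dots\times J_N}$ is treated with $\mathscr{I}=\textbf{1}$, the definition (\ref{eq10}) gives $(\mathcal{A}*\mathcal{X}_t)_{j_1\dots j_N}=\sum_{k_1,\dots,k_N}\mathcal{A}_{j_1k_1\dots j_Nk_N}(\mathcal{X}_t)_{k_1\dots k_N}$, and substituting the outer-product entries of $\mathcal{A}$ turns this into $\sum_{k_1,\dots,k_N}\big[\prod_{n=1}^N(\textbf{A}_n)_{j_nk_n}\big](\mathcal{X}_t)_{k_1\dots k_N}$. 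Next I would expand the Tucker product $\mathcal{X}_t\times\{\textbf{A}_1,\dots,\textbf{A}_N\}$ by iterating the mode-$n$ multiplication (\ref{eq4}) across all $N$ modes, which yields precisely $\sum_{k_1,\dots,k_N}(\mathcal{X}_t)_{k_1\dots k_N}\prod_{n=1}^N(\textbf{A}_n)_{j_nk_n}$. The two component formulas are identical, establishing $\mathcal{A}*\mathcal{X}_t=\mathcal{X}_t\times\{\textbf{A}_1,\dots,\textbf{A}_N\}$. The identical computation with $\textbf{B}_n$ (contracting the $K_n$ indices of $\mathcal{U}_t$) and with $\textbf{C}_n$ gives $\mathcal{B}*\mathcal{U}_t=\mathcal{U}_t\times\{\textbf{B}_1,\dots,\textbf{B}_N\}$ and $\mathcal{C}*\mathcal{X}_t=\mathcal{X}_t\times\{\textbf{C}_1,\dots,\textbf{C}_N\}$.

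Finally, since tensor addition is componentwise, the state update in (\ref{eq11}) becomes $\mathcal{A}*\mathcal{X}_t+\mathcal{B}*\mathcal{U}_t=\mathcal{X}_t\times\{\textbf{A}_1,\dots,\textbf{A}_N\}+\mathcal{U}_t\times\{\textbf{B}_1,\dots,\textbf{B}_N\}$, which is exactly the state update of (\ref{eq9}); the output equation agrees by the same token. Hence (\ref{eq9}) is recovered as the special case of (\ref{eq11}) obtained by the stated outer-product choices. I do not anticipate a genuine obstacle: the argument is a direct index-matching verification, and the only care required is bookkeeping---confirming that the indices $(k_1,\dots,k_N)$ summed over by the Einstein product are exactly the mode indices eliminated by the successive Tucker multiplications, and that the alternating paired ordering produced by $\circ$ aligns the second index of each $\textbf{A}_n$ (respectively $\textbf{B}_n$, $\textbf{C}_n$) with the mode being contracted.
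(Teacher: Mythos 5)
Your proof is correct and follows essentially the same route as the paper: both arguments expand the Tucker product $\mathcal{X}\times\{\textbf{A}_1,\dots,\textbf{A}_N\}$ componentwise, identify the product $\prod_n(\textbf{A}_n)_{j_nk_n}$ with the entries of the outer product $\textbf{A}_1\circ\cdots\circ\textbf{A}_N$, and recognize the resulting contraction as the Einstein product, with the same verification carrying over to $\mathcal{B}$ and $\mathcal{C}$. Your added remark that the outer product automatically produces the paired-index layout $\{j_1i_1\},\dots,\{j_Ni_N\}$ with no permutation needed is a correct and slightly more explicit piece of bookkeeping than the paper records, but it is not a different method.
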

\begin{proof}
Consider a general Tucker decomposition $\mathcal{Y}=\mathcal{X}\times\{\textbf{A}_1,\textbf{A}_2,\dots,\textbf{A}_N\}$ and rewrite it elementwise, i.e.
\begin{align*}
\mathcal{Y}_{i_1i_2\dots i_N}&=\sum_{\textbf{j}=\textbf{1}}^{\mathscr{J}} (\textbf{A}_1)_{i_1j_1}\dots (\textbf{A}_N)_{i_Nj_N}\mathcal{X}_{j_1j_2\dots j_N}\\
&=\sum_{\textbf{j}=\textbf{1}}^{\mathscr{J}} \mathcal{A}_{i_1j_1\dots i_Nj_N}\mathcal{X}_{j_1j_2\dots j_N},
\end{align*}
for $\mathcal{A}_{i_1j_1\dots i_Nj_N}=(\textbf{A}_1)_{i_1j_1}\dots (\textbf{A}_N)_{i_Nj_N}$. By the definitions of outer product and the Einstein product, it follows that $ \mathcal{Y}=\mathcal{A}*\mathcal{X}$ where $ \mathcal{A}=\textbf{A}_1\circ \textbf{A}_2\circ \dots \circ \textbf{A}_N$. Hence, the result follows immediately.
\end{proof}

The Einstein product representation (\ref{eq11}) of MLTI systems is indeed the generalization of (\ref{eq9}), and overcomes most of the limitations of Tucker product based MTLI representation discussed above. More importantly, it takes a form similar to the standard LTI system model, and so the representation is more natural for developing MLTI systems theory which we discuss next.

\subsection{Solution of MLTI System}
We first investigate the elementary solution to MLTI systems (\ref{eq11}), which is crucial in the analysis of stability, reachability and observability.
\begin{proposition}\label{pro9}
For the unforced MLTI system
\begin{equation}
\mathcal{X}_{t+1}=\mathcal{A}*\mathcal{X}_t,\label{eq49}
\end{equation}
the solution for $\mathcal{X}$ at time $k$ given initial condition $\mathcal{X}_0$ is $\mathcal{X}_k=\mathcal{A}^{*k}* \mathcal{X}_0$ where $\mathcal{A}^{*k}=\underbrace{\mathcal{A}*\mathcal{A}*\dots*\mathcal{A}}_{k}$.
\end{proposition}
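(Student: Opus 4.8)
The plan is to argue by induction on the time index $k$, with associativity of the Einstein product as the single nontrivial ingredient. The base case $k=1$ is immediate from the governing equation (\ref{eq49}): setting $t=0$ gives $\mathcal{X}_1 = \mathcal{A}*\mathcal{X}_0 = \mathcal{A}^{*1}*\mathcal{X}_0$. One may equivalently start at $k=0$ under the convention $\mathcal{A}^{*0}=\mathcal{I}$, the U-identity tensor, so that $\mathcal{X}_0 = \mathcal{I}*\mathcal{X}_0$. For the inductive step I would assume $\mathcal{X}_k = \mathcal{A}^{*k}*\mathcal{X}_0$ for some $k\geq 1$, apply (\ref{eq49}) once more, and substitute the hypothesis:
\begin{equation*}
\mathcal{X}_{k+1} = \mathcal{A}*\mathcal{X}_k = \mathcal{A}*(\mathcal{A}^{*k}*\mathcal{X}_0) = (\mathcal{A}*\mathcal{A}^{*k})*\mathcal{X}_0 = \mathcal{A}^{*(k+1)}*\mathcal{X}_0,
\end{equation*}
which closes the induction. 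The third equality is the only one needing justification — it invokes associativity of $*$ — and the last is merely the definition of $\mathcal{A}^{*(k+1)}$.

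Associativity of the Einstein product is exactly where the isomorphism $\varphi$ does the work. Since $\varphi$ is a ring isomorphism on square even-order paired tensors, it carries the Einstein product to ordinary matrix multiplication, $\varphi(\mathcal{A}*\mathcal{B})=\varphi(\mathcal{A})\varphi(\mathcal{B})$, so associativity of $*$ is inherited from associativity of matrix multiplication. The one mild subtlety I would flag is that $\mathcal{X}_t$ is an ordinary $N$-th order tensor rather than a paired square tensor, so it formally sits in the $\mathscr{I}=\textbf{1}$ case of the Einstein product. This causes no difficulty: under $\varphi$ the state maps to a column vector $\varphi(\mathcal{X}_t)\in\mathbb{R}^{|\mathscr{J}|}$, and $\mathcal{A}*\mathcal{X}_t$ maps to the matrix–vector product $\varphi(\mathcal{A})\varphi(\mathcal{X}_t)$, so the whole recursion (\ref{eq49}) is the image under $\varphi$ of the classical LTI recursion.

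In fact this observation gives an even shorter route that I would present as the primary argument. Applying $\varphi$ to (\ref{eq49}) yields $x_{t+1}=\textbf{A}x_t$ with $x_t=\varphi(\mathcal{X}_t)$ and $\textbf{A}=\varphi(\mathcal{A})$; the elementary LTI solution $x_k=\textbf{A}^k x_0$ then maps back under $\varphi^{-1}$, using $\varphi(\mathcal{A}^{*k})=\textbf{A}^k$, to give $\mathcal{X}_k=\mathcal{A}^{*k}*\mathcal{X}_0$. I expect no real obstacle: the entire content of the proposition is the bookkeeping that Einstein-product powers behave like matrix powers under the unfolding, which the ring isomorphism guarantees. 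The only thing to state carefully is the $\mathscr{I}=\textbf{1}$ reduction above, so that readers see why the square-tensor isomorphism still governs a computation in which one factor is a non-paired state tensor.
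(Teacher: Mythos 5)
Your proof is correct, and it fills in exactly the argument the paper leaves implicit: the paper states only that the proof ``is straightforward using the notion of even-order paired tensors and the Einstein product,'' which is precisely your induction via associativity of $*$, with the $\varphi$-unfolding route (and your careful handling of the $\mathscr{I}=\textbf{1}$ case for the state tensor) matching how the paper justifies such facts elsewhere, e.g.\ Lemma \ref{lem3}. No gaps; this is essentially the same approach, spelled out.
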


The proof is straightforward using the notion of even-order paired tensors and the Einstein product. If the even-order paired tensor is of the form $\mathcal{A}=\textbf{A}_1\circ \textbf{A}_2\circ \dots \circ \textbf{A}_N$, the $k$-th power Einstein product of $\mathcal{A}$ can be computed by $\mathcal{A}^{*k}=\textbf{A}_1^k\circ \textbf{A}_2^k\circ \dots \circ \textbf{A}_N^k$. Applying Proposition \ref{pro9}, we can write down the explicit solution of (\ref{eq11}) which takes an analogous form to the LTI system,
\begin{equation}
\mathcal{X}_k=\mathcal{A}^{*k}* \mathcal{X}_0+\sum_{j=0}^{k-1}\mathcal{A}^{*k-j-1}*\mathcal{B}*\mathcal{U}_j. \label{eq:31}
\end{equation}

\subsection{Stability.} 
There are many notions of stability for dynamical systems \cite{Brockett2015,Rugh_1996,Kailath_1980}. For LTI systems, it is conventional to investigate the so-called internal stability. Generalizing from LTI systems, the equilibrium point $\mathcal{X}=\mathcal{O}$ of an unforced MLTI system is called stable if $\|\mathcal{X}_t\|\leq \gamma \|\mathcal{X}_0\|$ for some $\gamma>0$, asymptotically stable if $\|\mathcal{X}_t\| \rightarrow 0$ as $t\rightarrow \infty$, and unstable if it is not stable. 

\begin{proposition}
For an unforced MLTI system (\ref{eq49}), the equilibrium point $\mathcal{X}=\mathcal{O}$ is:
\begin{itemize}
\item stable if and only if the magnitudes of all the U-eigenvalues of $\mathcal{A}$ are less than or equal to 1; for those equal to 1, its algebraic and geometry multiplicity must be equal;
\item asymptotically (or exponentially) stable if the magnitudes of all the U-eigenvalues are less than 1;
\item unstable if the magnitudes of some of the U-eigenvalues are greater than 1.
\end{itemize}
\end{proposition}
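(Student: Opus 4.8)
The plan is to transport the entire problem to the matrix setting via the ring isomorphism $\varphi$ and then invoke the classical internal stability theorem for discrete-time LTI systems. First I would apply Proposition \ref{pro9} to write the free response as $\mathcal{X}_k=\mathcal{A}^{*k}*\mathcal{X}_0$ and push it through $\varphi$. Since $\varphi$ is a ring isomorphism with respect to addition and the Einstein product, it satisfies $\varphi(\mathcal{A}^{*k}*\mathcal{X}_0)=\varphi(\mathcal{A})^k\varphi(\mathcal{X}_0)$, where $\varphi(\mathcal{A})\in\text{GL}(|\mathscr{J}|,\mathbb{R})$ and $\varphi(\mathcal{X}_0)$ is the vectorization of the state tensor in $\mathbb{R}^{|\mathscr{J}|}$ (taking $\mathscr{I}=\textbf{1}$). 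Thus the tensor trajectory corresponds exactly to the trajectory of the ordinary LTI system $\textbf{x}_{k+1}=\varphi(\mathcal{A})\textbf{x}_k$.

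The second ingredient is that $\varphi$ is an isometry between the Frobenius norm and the Euclidean norm: since $\varphi$ merely permutes and reindexes the entries, $\|\mathcal{X}_t\|=\|\varphi(\mathcal{X}_t)\|_2$. Consequently the three stability notions defined through the Frobenius norm (bounded trajectories, $\|\mathcal{X}_t\|\to 0$, and unboundedness) are in bijective correspondence with the same notions for the matrix system measured in the Euclidean norm. This reduces the proposition to the standard statement that $\textbf{x}_{k+1}=\varphi(\mathcal{A})\textbf{x}_k$ is stable iff all eigenvalues of $\varphi(\mathcal{A})$ lie in the closed unit disk with those on the unit circle being semisimple, asymptotically stable iff all eigenvalues lie in the open unit disk, and unstable iff some eigenvalue lies strictly outside.

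The remaining work is to certify that the spectral data match, namely that the U-eigenvalues of $\mathcal{A}$ coincide with the eigenvalues of $\varphi(\mathcal{A})$ with matching multiplicities. For the values themselves, applying $\varphi$ to $\mathcal{A}*\mathcal{X}=\lambda\mathcal{X}$ yields $\varphi(\mathcal{A})\varphi(\mathcal{X})=\lambda\varphi(\mathcal{X})$, so U-eigenvalue/U-eigentensor pairs correspond bijectively to eigenvalue/eigenvector pairs of $\varphi(\mathcal{A})$. Algebraic multiplicities agree because Lemma \ref{lem3}, together with $\text{det}_U(\lambda\mathcal{I}-\mathcal{A})=\det(\lambda\textbf{I}-\varphi(\mathcal{A}))$, shows the characteristic polynomials are identical; geometric multiplicities agree because Proposition \ref{prop3} gives $\text{rank}_U(\lambda\mathcal{I}-\mathcal{A})=\text{rank}(\lambda\textbf{I}-\varphi(\mathcal{A}))$, so the nullities, i.e. the dimensions of the U-eigenspaces, coincide.

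I expect the main obstacle to be the marginal case of U-eigenvalues of magnitude exactly one, where one must argue that equality of algebraic and geometric multiplicity is precisely the semisimplicity (non-defectiveness) condition that keeps the powers $\varphi(\mathcal{A})^k$ bounded; this is where the full Jordan structure, rather than just the spectrum, is what matters. Everything else is bookkeeping carried by the isomorphism, so care is needed only to define tensor eigenvalue multiplicity so that its preservation under $\varphi$ is exact, after which the classical discrete-time result transfers verbatim.
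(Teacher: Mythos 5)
Your proof is correct, but it takes a genuinely different route from the paper's. The paper argues natively in the tensor setting: it invokes the tensor eigenvalue decomposition $\mathcal{A}=\mathcal{V}*\mathcal{D}*\mathcal{V}^{-1}$ of (\ref{eq216}) and expands the powers as $\mathcal{A}^{*k}=\sum_{\textbf{j}=\textbf{1}}^{\mathscr{J}}\lambda^k_{j_1j_1\dots j_Nj_N}\,\mathcal{W}_{j_1j_1\dots j_Nj_N}$ for fixed even-order paired tensors $\mathcal{W}$, reading off the three regimes from the growth of $\lambda^k$ --- but it does so only under the explicit assumption that $\mathcal{A}$ has a full set of U-eigentensors. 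Your unfolding argument is strictly more complete: by transporting the trajectory through the ring isomorphism $\varphi$, observing that $\varphi$ is a Frobenius--Euclidean isometry, and matching algebraic multiplicities via $\text{det}_U$ (with Lemma \ref{lem3}) and geometric multiplicities via $\text{rank}_U$ (Proposition \ref{prop3}), you inherit the full Jordan-structure version of the classical discrete-time theorem, which is exactly what is needed for the marginal clause about U-eigenvalues of magnitude one having equal algebraic and geometric multiplicity --- the one case the paper's diagonalizable-only proof does not actually cover. What the paper's route buys instead is brevity and a derivation intrinsic to the tensor algebra, in keeping with its stated aim of developing tensor-native criteria rather than unfolding everything to matrices. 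Two small repairs to your write-up: $\varphi(\mathcal{A})$ need not lie in $\text{GL}(|\mathscr{J}|,\mathbb{R})$, since $\mathcal{A}$ is not assumed U-invertible (nor need it be --- zero U-eigenvalues are benign for stability), and your argument only uses the homomorphism identity $\varphi(\mathcal{A}*\mathcal{X})=\varphi(\mathcal{A})\varphi(\mathcal{X})$, which holds regardless; also, since U-eigenvalues and U-eigentensors may be complex, you should note that $\varphi$, $\text{det}_U$, and Proposition \ref{prop3} extend entrywise to $\mathbb{C}$ --- immediate, but worth a sentence to make the multiplicity bookkeeping airtight.
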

\begin{proof}
We only focus on the case when  $\mathcal{A}$ has a full set of U-eigentensors, i.e. $\mathcal{A}=\mathcal{V}*\mathcal{D}*\mathcal{V}^{-1}$ in (\ref{eq216}). It follows from the solution of system (\ref{eq49}) that
\begin{align*}
\mathcal{A}^{*k} = \sum_{\textbf{j}=\textbf{1}}^{\mathscr{J}} \lambda^k_{j_1j_1\dots j_Nj_N} \mathcal{W}_{j_1j_1\dots j_Nj_N},
\end{align*}
where, $\lambda_{j_1j_1\dots j_Nj_N}$ are the U-eigenvalues of $\mathcal{A}$, and $\mathcal{W}_{j_1j_1\dots j_Nj_N}$ are some even-order paired tensors. Then the results follow immediately.
\end{proof}

\subsection{Reachability}
Here and in the following subsection, we introduce the definitions of reachability and observability for MLTI systems which are similar to analogous concepts for the LTI systems \cite{Brockett2015,Rugh_1996,Kailath_1980}. We then establish sufficient and necessary conditions for reachability and observability for the MLTI systems.

\begin{Definition}
 The MLTI system (\ref{eq11}) is said to be \textit{reachable} on $[t_0,t_1]$ if, given any initial condition $\mathcal{X}_0$ and any final state $\mathcal{X}_1$, there exists a sequence of inputs $\mathcal{U}_t$ that steers the state of the system from $\mathcal{X}_{t_0}=\mathcal{X}_0$ to $\mathcal{X}_{t_1}=\mathcal{X}_1$.
\end{Definition}

\begin{theorem}\label{thm31}
 The pair $(\mathcal{A},\mathcal{B})$ is reachable on $[t_0,t_1]$ if and only if the \textit{reachability Gramian}
\begin{equation}
\mathcal{W}_c(t_0,t_1)=\sum_{t=t_0}^{t_1-1}\mathcal{A}^{*t_1-t-1}*\mathcal{B}*\mathcal{B}^{\top}*(\mathcal{A}^{\top})^{*t_1-t-1},
\end{equation}
which is a weakly symmetric even-order square tensor, is U-positive definite.
\end{theorem}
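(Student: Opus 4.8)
The plan is to reduce the theorem to the classical LTI reachability criterion by transporting everything through the unfolding isomorphism $\varphi$, and then to transfer the positive-definiteness characterization back to the tensor side. The point is that $\varphi$ converts the Einstein product into matrix multiplication, so it carries the entire MLTI system onto an ordinary LTI system where the result is known.

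First I would apply $\varphi$ to the dynamics (\ref{eq11}). Setting $\textbf{x}_t=\varphi(\mathcal{X}_t)$ and $\textbf{u}_t=\varphi(\mathcal{U}_t)$ (viewing the state and input tensors as even-order paired tensors with $\mathscr{I}=\textbf{1}$, so that $\varphi$ reduces to the $ivec$ vectorization), and $\textbf{A}=\varphi(\mathcal{A})$, $\textbf{B}=\varphi(\mathcal{B})$, the ring-isomorphism property makes (\ref{eq11}) become the LTI system $\textbf{x}_{t+1}=\textbf{A}\textbf{x}_t+\textbf{B}\textbf{u}_t$ on $\mathbb{R}^{|\mathscr{J}|}$. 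Since $\varphi$ is a bijection between state tensors and their vectorizations and between input tensors and theirs, a sequence $\{\mathcal{U}_t\}$ steers $\mathcal{X}_0$ to $\mathcal{X}_1$ in the tensor system exactly when $\{\textbf{u}_t\}$ steers $\varphi(\mathcal{X}_0)$ to $\varphi(\mathcal{X}_1)$ in the LTI system. Hence $(\mathcal{A},\mathcal{B})$ is reachable on $[t_0,t_1]$ if and only if the matrix pair $(\textbf{A},\textbf{B})$ is reachable on $[t_0,t_1]$.

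Second, I would show that $\varphi$ sends the tensor Gramian to the classical matrix Gramian. Using multiplicativity of $\varphi$ over the Einstein product (so $\varphi(\mathcal{A}^{*k})=\textbf{A}^{k}$) together with the transpose identity $\varphi(\mathcal{A}^{\top})=\varphi(\mathcal{A})^{\top}$ — which follows directly from (\ref{eq:12}), since the U-transpose swaps each $j_n\leftrightarrow i_n$ and thereby interchanges $ivec(\textbf{j},\mathscr{J})$ with $ivec(\textbf{i},\mathscr{I})$ — one obtains
\begin{equation*}
\varphi\big(\mathcal{W}_c(t_0,t_1)\big)=\sum_{t=t_0}^{t_1-1}\textbf{A}^{t_1-t-1}\textbf{B}\textbf{B}^{\top}(\textbf{A}^{\top})^{t_1-t-1},
\end{equation*}
which is precisely the reachability Gramian of $(\textbf{A},\textbf{B})$. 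Invoking the standard LTI result, $(\textbf{A},\textbf{B})$ is reachable on $[t_0,t_1]$ if and only if this matrix Gramian is positive definite.

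Finally, I would close the loop using the earlier remark that $\mathcal{W}_c$ is U-positive definite if and only if $\varphi(\mathcal{W}_c)$ is positive definite. Chaining the equivalences — tensor reachability $\iff$ matrix reachability $\iff$ matrix Gramian positive definite $\iff$ tensor Gramian U-positive definite — yields the claim; weak symmetry of $\mathcal{W}_c$ corresponds under $\varphi$ to symmetry of the (automatically positive semidefinite) matrix Gramian, which is why the dividing line is positive definiteness rather than mere invertibility. I expect the only delicate point to be justifying that $\varphi$ behaves as a $*$-homomorphism on the rectangular block $\mathcal{B}$ as well as on square tensors — that multiplicativity and transpose-compatibility extend beyond the group setting of the earlier corollary to the non-square even-order paired tensors appearing in $\mathcal{B}*\mathcal{B}^{\top}$ — but this is a bookkeeping check on (\ref{eq:12}) rather than a genuine obstacle. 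A fully self-contained alternative would instead mirror the classical Gramian argument directly in tensor language: for sufficiency, set $\mathcal{U}_t=\mathcal{B}^{\top}*(\mathcal{A}^{\top})^{*(t_1-t-1)}*\mathcal{W}_c^{-1}*(\mathcal{X}_1-\mathcal{A}^{*(t_1-t_0)}*\mathcal{X}_0)$ and verify via (\ref{eq:31}) that $\mathcal{X}_{t_1}=\mathcal{X}_1$; for necessity, note that $\mathcal{W}_c$ is always positive semidefinite and that a nonzero null tensor would annihilate every $\mathcal{B}^{\top}*(\mathcal{A}^{\top})^{*(t_1-t-1)}*\mathcal{X}$, obstructing reachability. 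In that more direct route the necessity direction is the main obstacle.
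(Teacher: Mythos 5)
Your proposal is correct, but your primary route is genuinely different from the paper's. The paper proves Theorem \ref{thm31} entirely in tensor language: for sufficiency it picks the steering input $\mathcal{U}_t=\mathcal{B}^{\top}*(\mathcal{A}^{\top})^{*t_1-t-1}*\mathcal{W}_c^{-1}(t_0,t_1)*\mathcal{V}$, verifies via the explicit solution (\ref{eq:31}) that $\mathcal{X}_{t_1}=\mathcal{A}^{*t_1}*\mathcal{X}_0+\mathcal{V}$, and sets $\mathcal{V}=\mathcal{X}_1-\mathcal{A}^{*t_1}*\mathcal{X}_0$; for necessity it argues by contradiction, extracting from the failure of U-positive definiteness a nonzero $\mathcal{X}_a$ with $\mathcal{X}_a^{\top}*\mathcal{A}^{*t_1-t-1}*\mathcal{B}=\mathcal{O}$ for all $t$, choosing the target $\mathcal{X}_1=\mathcal{X}_a+\mathcal{A}^{*t_1}*\mathcal{X}_0$, and deducing $\mathcal{X}_a^{\top}*\mathcal{X}_a=0$. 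This is exactly the ``fully self-contained alternative'' you relegate to the end of your write-up, so you had the paper's argument in hand but demoted it. Your main route --- transporting the whole problem through $\varphi$ and invoking the classical LTI Gramian criterion --- is also sound: the ingredients you flag as needing a check (multiplicativity of $\varphi$ over the Einstein product for rectangular even-order paired tensors, $\varphi(\mathcal{A}^{\top})=\varphi(\mathcal{A})^{\top}$, and U-positive definiteness of $\mathcal{W}_c$ being equivalent to positive definiteness of $\varphi(\mathcal{W}_c)$) are either stated explicitly in the paper or are the same index bookkeeping the paper itself relies on when it unfolds the reachability tensor in the proof of Proposition \ref{coro3.1}; amusingly, the paper makes the opposite stylistic choice in the two results, using the isomorphism for the Kalman rank condition but the intrinsic argument for the Gramian. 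What each buys: your reduction is shorter, inherits the classical theorem wholesale, and is actually more careful about time indices (your $\mathcal{A}^{*(t_1-t_0)}$ versus the paper's $\mathcal{A}^{*t_1}$, which tacitly takes $t_0=0$); the paper's intrinsic proof demonstrates that the tensor formalism supports control-theoretic reasoning natively without unfolding, which is the conceptual point of the paper. One refinement for your direct-route necessity sketch: the passage from ``not U-positive definite'' to a null direction $\mathcal{X}_a$ uses that $\mathcal{W}_c$ is weakly symmetric and automatically positive semidefinite, so $\mathcal{X}_a^{\top}*\mathcal{W}_c*\mathcal{X}_a=0$ forces each summand, a squared norm $\|\mathcal{X}_a^{\top}*\mathcal{A}^{*t_1-t-1}*\mathcal{B}\|^2$, to vanish; with that observation your argument closes exactly as the paper's does.
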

\begin{proof}
Suppose $\mathcal{W}_c(t_0,t_1)$ is U-positive definite, and let $\mathcal{X}_0$ be the initial state and $\mathcal{X}_1$ be the desired final state. Choose $\mathcal{U}_t=\mathcal{B}^{\top}*(\mathcal{A}^{\top})^{*t_1-t-1}*\mathcal{W}_c^{-1}(t_0,t_1)*\mathcal{V}$ for some constant tensor $\mathcal{V}$. It follows from the explicit solution of system (\ref{eq11}) that
\begin{align*}
\mathcal{X}_{t_1}&= \mathcal{A}^{*t_1}* \mathcal{X}_0+\sum_{j=0}^{t_1-1}\mathcal{A}^{*t_1-j-1}*\mathcal{B}*\mathcal{U}_t\\
&= \mathcal{A}^{*t_1}* \mathcal{X}_0+\mathcal{W}_c(t_0,t_1)*\mathcal{W}_c^{-1}(t_0,t_1)*\mathcal{V}\\
&= \mathcal{A}^{*t_1} *\mathcal{X}_0+\mathcal{V}\,.
\end{align*}
Take $\mathcal{V}=-\mathcal{A}^{*t_1}* \mathcal{X}_0+\mathcal{X}_1$, we have $\mathcal{X}_{t_1}=\mathcal{X}_1$.

We show the converse by contradiction. Suppose $\mathcal{W}_c(t_0,t_1)$ is not U-positive definite. Then there exists $\mathcal{X}_a\neq \mathcal{O}$ such that $\mathcal{X}_a^\top*\mathcal{A}^{*t_1-t-1}*\mathcal{B}=\mathcal{O}$ for any $t$. Take $\mathcal{X}_1=\mathcal{X}_a+\mathcal{A}^{*t_1}*\mathcal{X}_0$, and it follows that
\begin{equation*}
\mathcal{X}_a+\mathcal{A}^{*t_1}*\mathcal{X}_0=\mathcal{A}^{*t_1}*\mathcal{X}_0+\sum_{j=t_0}^{t_1-1}\mathcal{A}^{*t_1-j-1}*\mathcal{B}*\mathcal{U}_j\,.
\end{equation*}
Multiplying from the left by $\mathcal{X}_a^\top$ yields
\begin{align*}
\mathcal{X}_a^\top*\mathcal{X}_a=\sum_{j=t_0}^{t_1-1}\mathcal{X}_a^\top*\mathcal{A}^{*t_1-j-1}*\mathcal{B}*\mathcal{U}_j=0,
\end{align*}
which implies that $\mathcal{X}_a=\mathcal{O}$, a contradiction. 
\end{proof}

The reachability Gramian assesses to what degree each state is affected by an input \cite{doi.org/10.1142/S0218127405012429}. The infinite horizon reachability Gramian can be computed from the tensor Lyapunov equation which is defined by
\begin{equation}
\mathcal{W}_c - \mathcal{A}*\mathcal{W}_c*\mathcal{A}^\top = \mathcal{B}*\mathcal{B}^\top.
\end{equation}
If the pair $(\mathcal{A},\mathcal{B})$ is reachable and all the U-eigenvalues of $\mathcal{A}$ have magnitude less than 1, one can show that there exists a unique weakly symmetric U-positive definite solution $\mathcal{W}_c$. Solving the infinite horizon reachability Gramian from the tensor Lyapunov equation may be computationally intensive, so a tensor version of the Kalman rank condition is also provided.

\begin{proposition}\label{coro3.1}
The pair $(\mathcal{A},\mathcal{B})$ is reachable if and only if the $J_1\times J_1K_1\times \dots \times J_N\times  J_NK_N$ even-order reachability tensor
\begin{equation}
\mathscr{R}=
\begin{Vmatrix}
\mathcal{B} & \mathcal{A}*\mathcal{B} & \dots & \mathcal{A}^{*|\mathscr{J}|-1}*\mathcal{B}
\end{Vmatrix}
\end{equation}
spans $\mathbb{R}^{J_1\times J_2\times\dots \times J_N}$. In other words, $\text{rank}_U(\mathscr{R})=|\mathscr{J}|$.
\end{proposition}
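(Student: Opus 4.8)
The plan is to transport the entire problem to the matrix setting through the ring isomorphism $\varphi$, apply the classical Kalman rank condition there, and then translate the resulting matrix rank condition back into the tensor rank$_U$ of $\mathscr{R}$ using the block-tensor machinery of Corollary \ref{coro1}.

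First I would apply $\varphi$ to the governing equations (\ref{eq11}), writing $\textbf{A}=\varphi(\mathcal{A})$ and $\textbf{B}=\varphi(\mathcal{B})$, and regarding each state $\mathcal{X}_t$ and input $\mathcal{U}_t$ as the column vectors $\varphi(\mathcal{X}_t),\varphi(\mathcal{U}_t)$ obtained by treating them as even-order paired tensors with $\mathscr{I}=\textbf{1}$. Since $\varphi$ is multiplicative with respect to the Einstein product (the ring isomorphism stated after (\ref{eq:12}), extended to the rectangular/non-square factors exactly as already used in the stability proof), it carries $\mathcal{A}*\mathcal{X}_t$ and $\mathcal{B}*\mathcal{U}_t$ to $\textbf{A}\varphi(\mathcal{X}_t)$ and $\textbf{B}\varphi(\mathcal{U}_t)$, so (\ref{eq11}) becomes the standard LTI recursion $\varphi(\mathcal{X}_{t+1})=\textbf{A}\varphi(\mathcal{X}_t)+\textbf{B}\varphi(\mathcal{U}_t)$ on $\mathbb{R}^{|\mathscr{J}|}$. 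Because $\varphi$ is a bijection between tensor states and vectors and between tensor inputs and vectors, a steering sequence $\mathcal{U}_{t_0},\dots,\mathcal{U}_{t_1-1}$ carrying $\mathcal{X}_0$ to $\mathcal{X}_1$ exists if and only if the corresponding vector inputs carry $\varphi(\mathcal{X}_0)$ to $\varphi(\mathcal{X}_1)$; hence $(\mathcal{A},\mathcal{B})$ is reachable exactly when the LTI pair $(\textbf{A},\textbf{B})$ is reachable.

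Next I would invoke the classical Kalman rank condition: $(\textbf{A},\textbf{B})$ is reachable if and only if $\text{rank}\big(\begin{bmatrix}\textbf{B} & \textbf{A}\textbf{B} & \cdots & \textbf{A}^{|\mathscr{J}|-1}\textbf{B}\end{bmatrix}\big)=|\mathscr{J}|$, the truncation at the power $|\mathscr{J}|-1$ being justified by the Cayley--Hamilton theorem, whose tensor analogue is Lemma \ref{lem3}. It then remains to identify this matrix rank with $\text{rank}_U(\mathscr{R})$. Using the generalized form of Corollary \ref{coro1} (valid for the multiple-block mode row concatenation used to assemble $\mathscr{R}$) together with the identities $\varphi(\mathcal{A}^{*k}*\mathcal{B})=\textbf{A}^k\textbf{B}$, I obtain $\varphi(\mathscr{R})=\begin{bmatrix}\textbf{B} & \textbf{A}\textbf{B} & \cdots & \textbf{A}^{|\mathscr{J}|-1}\textbf{B}\end{bmatrix}\textbf{P}$ for a permutation matrix $\textbf{P}$. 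By Proposition \ref{prop3} and invariance of matrix rank under column permutation, $\text{rank}_U(\mathscr{R})=\text{rank}(\varphi(\mathscr{R}))=\text{rank}\big(\begin{bmatrix}\textbf{B} & \cdots & \textbf{A}^{|\mathscr{J}|-1}\textbf{B}\end{bmatrix}\big)$. Chaining the equivalences yields reachability $\Leftrightarrow \text{rank}_U(\mathscr{R})=|\mathscr{J}|$, which is the same as $R(\mathscr{R})$ spanning all of $\mathbb{R}^{J_1\times\cdots\times J_N}$ since $|\mathscr{J}|=\dim\mathbb{R}^{J_1\times\cdots\times J_N}$.

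I expect the main obstacle to be the bookkeeping in the two structural-faithfulness steps rather than any deep difficulty: namely, (i) verifying that $\varphi$ genuinely preserves reachability as a system equivalence, including that $\varphi$ behaves correctly on the non-square state/input tensors (the $\mathscr{I}=\textbf{1}$ case) so that the Einstein products collapse to the expected matrix--vector products; and (ii) tracking the exact column permutation $\textbf{P}$ relating the block tensor $\mathscr{R}$ to the classical reachability matrix, and confirming that the generalized Corollary \ref{coro1} indeed applies to the $|\mathscr{J}|$-fold concatenation defining $\mathscr{R}$. Once these index-level compatibilities are checked, the chain of equivalences assembles immediately.
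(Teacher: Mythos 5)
Your proposal is correct and follows essentially the same route as the paper's own proof: unfold via $\varphi$ to obtain $\varphi(\mathscr{R})=\begin{bmatrix}\textbf{B} & \textbf{A}\textbf{B} & \cdots & \textbf{A}^{|\mathscr{J}|-1}\textbf{B}\end{bmatrix}\textbf{P}$ for a permutation matrix $\textbf{P}$, and conclude from the classical Kalman rank condition together with $\text{rank}_U(\mathscr{R})=\text{rank}\big(\varphi(\mathscr{R})\big)$. You spell out the steps the paper leaves implicit (that $\varphi$ carries the MLTI system to an equivalent LTI system and preserves reachability, and that the generalized Corollary \ref{coro1} applies to the $|\mathscr{J}|$-fold concatenation), which is exactly the intended argument; the paper also notes an alternative purely tensorial proof via Lemma \ref{lem3}, which you mention but do not pursue.
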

\begin{proof}
Using the unfolding transformation $\varphi$ one can express, 
 \begin{equation*}
 \varphi(\mathscr{R}) = \begin{bmatrix}
\textbf{B} &  \textbf{A}\textbf{B} & \dots &  \textbf{A}^{|\mathscr{J}|-1}\textbf{B}
 \end{bmatrix}\textbf{P},
 \end{equation*}
where, $\textbf{A}=\varphi(\mathcal{A})$, $\textbf{B}=\varphi(\textbf{B})$ and $\textbf{P}$ is some permutation matrix. Then the result follows immediately.
\end{proof}

Alternatively, the proof can be developed directly in the tensor setting by applying Lemma \ref{lem3} like the approach used in Theorem \ref{thm31}. When $N=1$, Corollary \ref{coro3.1} simplifies to the famous Kalman rank condition for reachability of LTI systems.

\subsection{Observability}
\begin{Definition}
The MLTI system (\ref{eq9}) is said to be \textit{observable} on $[t_0,t_1]$ if any initial state $\mathcal{X}_{t_0}=\mathcal{X}_0$ can be uniquely determined by $\mathcal{Y}_t$ on $[t_0,t_1]$.
\end{Definition}

\begin{theorem}
The pair $(\mathcal{A},\mathcal{C})$ is observable on $[t_0,t_1]$ if and only if the observability Gramian
\begin{equation}
\mathcal{W}_o(t_0,t_1)=\sum_{t=t_0}^{t_1-1}(\mathcal{A}^{\top})^{*t-t_0}*\mathcal{C}^{\top}*\mathcal{C}*\mathcal{A}^{*t-t_0}\,,
\end{equation}
which is a weakly symmetric even-order square tensor, is U-positive definite.
\end{theorem}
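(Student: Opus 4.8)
The plan is to mirror the proof of Theorem~\ref{thm31}, exploiting the duality between reachability and observability. Since the contribution of the known inputs $\mathcal{U}_t$ to the output via the convolution term in (\ref{eq:31}) can be computed and subtracted, observability is equivalent to the injectivity of the map sending an initial state to the free response. Using Proposition~\ref{pro9}, this free response is $\mathcal{Y}_t=\mathcal{C}*\mathcal{A}^{*t-t_0}*\mathcal{X}_0$ for $t\in[t_0,t_1]$, so the system is observable precisely when $\mathcal{X}_0$ is uniquely recoverable from $\{\mathcal{Y}_t\}_{t=t_0}^{t_1-1}$.

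For the forward direction, suppose $\mathcal{W}_o(t_0,t_1)$ is U-positive definite, hence U-invertible by the isomorphism property. Premultiplying each free response by $(\mathcal{A}^{\top})^{*t-t_0}*\mathcal{C}^{\top}$ and summing over the horizon gives
\begin{equation*}
\sum_{t=t_0}^{t_1-1}(\mathcal{A}^{\top})^{*t-t_0}*\mathcal{C}^{\top}*\mathcal{Y}_t=\mathcal{W}_o(t_0,t_1)*\mathcal{X}_0,
\end{equation*}
so that $\mathcal{X}_0=\mathcal{W}_o^{-1}(t_0,t_1)*\sum_{t=t_0}^{t_1-1}(\mathcal{A}^{\top})^{*t-t_0}*\mathcal{C}^{\top}*\mathcal{Y}_t$ is uniquely determined, establishing observability.

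For the converse I would argue by contradiction, exactly as in Theorem~\ref{thm31}. Since $\mathcal{W}_o$ is weakly symmetric, the anti-homomorphism property of the U-transpose under the Einstein product yields, for any $\mathcal{X}$,
\begin{equation*}
\mathcal{X}^{\top}*\mathcal{W}_o(t_0,t_1)*\mathcal{X}=\sum_{t=t_0}^{t_1-1}\|\mathcal{C}*\mathcal{A}^{*t-t_0}*\mathcal{X}\|^2\geq 0,
\end{equation*}
so $\mathcal{W}_o$ is always U-positive semidefinite. If it fails to be U-positive definite, there is an $\mathcal{X}_a\neq\mathcal{O}$ with $\mathcal{X}_a^{\top}*\mathcal{W}_o*\mathcal{X}_a=0$, forcing $\mathcal{C}*\mathcal{A}^{*t-t_0}*\mathcal{X}_a=\mathcal{O}$ for every $t\in[t_0,t_1-1]$. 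Then the initial states $\mathcal{O}$ and $\mathcal{X}_a$ produce identical (zero) free responses and cannot be distinguished from the output, so the system is not observable---a contradiction.

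The main obstacle is justifying the quadratic-form identity used in the converse: one must show $(\mathcal{A}*\mathcal{B})^{\top}=\mathcal{B}^{\top}*\mathcal{A}^{\top}$ for the Einstein product and that $\mathcal{Z}^{\top}*\mathcal{Z}=\|\mathcal{Z}\|^2$ for an $N$-th order tensor $\mathcal{Z}$, together with the implication $\mathcal{X}_a^{\top}*\mathcal{X}_a=0\Rightarrow\mathcal{X}_a=\mathcal{O}$ already invoked in Theorem~\ref{thm31}. All of these follow cleanly by pushing the statement through the ring isomorphism $\varphi$, which carries the U-transpose to the ordinary matrix transpose and the Einstein product to matrix multiplication, thereby reducing the entire claim to the classical observability Gramian criterion for LTI systems.
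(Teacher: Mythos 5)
Your proof is correct and matches the paper's approach: the paper omits the proof of this theorem for space, but your argument is precisely the dual of the paper's proof of Theorem~\ref{thm31} --- the explicit reconstruction $\mathcal{X}_0=\mathcal{W}_o^{-1}(t_0,t_1)*\sum_{t=t_0}^{t_1-1}(\mathcal{A}^{\top})^{*t-t_0}*\mathcal{C}^{\top}*\mathcal{Y}_t$ plays the role of the explicit steering input, and your contradiction step mirrors the paper's $\mathcal{X}_a$ argument verbatim. The auxiliary identities you flag ($(\mathcal{A}*\mathcal{B})^{\top}=\mathcal{B}^{\top}*\mathcal{A}^{\top}$ and $\mathcal{X}_a^{\top}*\mathcal{X}_a=\|\mathcal{X}_a\|^2$, hence $\mathcal{X}_a^{\top}*\mathcal{X}_a=0\Rightarrow\mathcal{X}_a=\mathcal{O}$) do follow by pushing through the ring isomorphism $\varphi$, which is exactly how the paper justifies such facts elsewhere.
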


The observability Gramian assesses to what degree each state affects future outputs \cite{doi.org/10.1142/S0218127405012429}. The infinite horizon observability Gramian can be computed from the tensor Lyapunov equation defined by
\begin{equation}
\mathcal{A}^\top*\mathcal{W}_o*\mathcal{A} - \mathcal{W}_o = -\mathcal{C}^\top*\mathcal{C}\,.
\end{equation}
If the pair $(\mathcal{A},\mathcal{C})$ is observable and all the U-eigenvalues of $\mathcal{A}$ have magnitude less than 1, there exists a unique weakly symmetric U-positive definite solution $\mathcal{W}_o$.

\begin{proposition}
The pair $(\mathcal{A},\mathcal{C})$ is observable if and only if the $I_1J_1\times J_1\times \dots \times I_NJ_N\times J_N$ even-order observability tensor
\begin{equation}
\mathscr{O}=
\begin{Vmatrix}
\mathcal{C}\\
 \mathcal{C}*\mathcal{A}\\
 \vdots\\
 \mathcal{C}*\mathcal{A}^{*|\mathscr{J}|-1}
\end{Vmatrix}
\end{equation}
spans $\mathbb{R}^{J_1\times J_2\times\dots \times J_N}$. In other words, $\text{rank}_U(\mathscr{O})=|\mathscr{J}|$.
\end{proposition}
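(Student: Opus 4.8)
The plan is to mirror the proof of Proposition \ref{coro3.1} for reachability, passing to the unfolded picture through the isomorphism $\varphi$. First I would relate observability of the MLTI system to observability of the LTI system obtained by unfolding (\ref{eq11}). Applying $\varphi$ to the state and output equations and using that $\varphi$ is a ring isomorphism, the unforced dynamics become $\textbf{x}_{t+1}=\textbf{A}\textbf{x}_t$ and $\textbf{y}_t=\textbf{C}\textbf{x}_t$ with $\textbf{A}=\varphi(\mathcal{A})$, $\textbf{C}=\varphi(\mathcal{C})$, $\textbf{x}_t=\varphi(\mathcal{X}_t)$ and $\textbf{y}_t=\varphi(\mathcal{Y}_t)$. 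Since $\varphi$ is a bijection, the initial tensor $\mathcal{X}_0$ is uniquely recoverable from the outputs $\{\mathcal{Y}_t\}$ exactly when $\textbf{x}_0$ is uniquely recoverable from $\{\textbf{y}_t\}$; hence $(\mathcal{A},\mathcal{C})$ is observable if and only if the matrix pair $(\textbf{A},\textbf{C})$ is observable.

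Next I would compute $\varphi(\mathscr{O})$. Because $\mathscr{O}$ is built by the generalized column block construction applied to $\mathcal{C},\mathcal{C}*\mathcal{A},\dots,\mathcal{C}*\mathcal{A}^{*|\mathscr{J}|-1}$, the column-block analog of Proposition \ref{pro6} supplies a row permutation matrix $\textbf{Q}$ with
\begin{equation*}
\varphi(\mathscr{O})=\textbf{Q}\begin{bmatrix}\textbf{C}\\ \textbf{C}\textbf{A}\\ \vdots\\ \textbf{C}\textbf{A}^{|\mathscr{J}|-1}\end{bmatrix},
\end{equation*}
where the homomorphism property $\varphi(\mathcal{C}*\mathcal{A}^{*k})=\textbf{C}\textbf{A}^k$ has been used. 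Invoking Proposition \ref{prop3} together with the invariance of rank under the permutation $\textbf{Q}$, I obtain that $\text{rank}_U(\mathscr{O})=\text{rank}(\varphi(\mathscr{O}))$ equals the rank of the classical observability matrix of $(\textbf{A},\textbf{C})$.

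Finally I would close the argument with the classical Kalman observability rank condition: the pair $(\textbf{A},\textbf{C})$ is observable if and only if its observability matrix has full column rank $|\mathscr{J}|$. Combining this with the equivalence established in the first paragraph and the rank identity of the second yields that $(\mathcal{A},\mathcal{C})$ is observable if and only if $\text{rank}_U(\mathscr{O})=|\mathscr{J}|$, i.e. the observability tensor spans $\mathbb{R}^{J_1\times\dots\times J_N}$.

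The step I expect to be most delicate is justifying the column block unfolding identity, namely verifying that $\mathscr{O}$ maps under $\varphi$ to the vertically stacked matrix through a pure row permutation $\textbf{Q}$ (with no accompanying column permutation), which relies on the dual of the remark following Definition \ref{def:nmodedef} that column block tensors map to contiguous blocks up to row permutations. An attractive purely tensorial alternative, avoiding unfolding, is to argue as in Theorem \ref{thm31}: characterize observability through injectivity of the map $\mathcal{X}_0\mapsto\{\mathcal{C}*\mathcal{A}^{*t}*\mathcal{X}_0\}$, and use the tensor Cayley--Hamilton theorem (Lemma \ref{lem3}) to show that the powers $\mathcal{A}^{*k}$ with $k\geq|\mathscr{J}|$ contribute no new directions, so that truncating the stack at $k=|\mathscr{J}|-1$ leaves $\text{rank}_U(\mathscr{O})$ unchanged.
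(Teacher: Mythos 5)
Your proposal is correct and follows essentially the same route the paper takes: the paper omits the observability proof for space, but its proof of the dual reachability condition (Proposition~\ref{coro3.1}) is exactly this unfolding argument via $\varphi$, the block-tensor unfolding identity (the column-block counterpart of Proposition~\ref{pro6}), the rank equality of Proposition~\ref{prop3}, and the classical Kalman rank condition. Your closing remark about the purely tensorial alternative via Lemma~\ref{lem3} likewise matches the paper's own comment following the reachability proposition.
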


\section{Numerical Example}\label{sec:example}
To illustrate MLTI systems theory, we consider a simple single input single output system that is given by (\ref{eq9}) with
\begin{align*}
\textbf{A}_1&=\begin{bmatrix} 0 & 1 & 0\\ 0 & 0 & 1\\0.2 & 0.5 & 0.8\end{bmatrix}, \text{ }\textbf{A}_2=\begin{bmatrix} 0 & 1\\0.5 & 0\end{bmatrix},\\
\textbf{B}_1 &= \begin{bmatrix} 0\\0\\1\end{bmatrix},\text{ } \textbf{B}_2 = \begin{bmatrix} 0 \\1 \end{bmatrix},\\
\textbf{C}_1 &= \begin{bmatrix} 1 & 0 & 0 \end{bmatrix}, \text{ }\textbf{C}_2 = \begin{bmatrix} 1 & 0 \end{bmatrix},
\end{align*}
and the states $\mathcal{X}_t \in \mathbb{R}^{3\times 2}$ are second-order tensors, i.e. matrices. The U-eigenvalues of $\mathcal{A}=\textbf{A}_1\circ \textbf{A}_2$ are $\pm 0.9207$, $- 0.1775\pm 0.2128i$ and $0.1775\pm 0.2128i$. Hence, the MLTI system is asymptotically stable. In addition, the reachability and observability tensors are given by
\begin{align*}
\mathscr{R}_{::11} &=
\begin{bmatrix}
0 & 0 & 0\\
0 & 1 & 0\\
0 & 0.8 & 0
\end{bmatrix}, \hspace{1.1cm}
\mathscr{R}_{::21} =
\begin{bmatrix}
0 & 0 & 0.5\\
0 & 0 & 0.4\\
1 & 0 & 0.57
\end{bmatrix},\\
\mathscr{R}_{::12} &=
\begin{bmatrix}
0.4 & 0 & 0.378\\
0.57 & 0 & 0.4849\\
0.756 & 0 & 0.6339
\end{bmatrix},
\mathscr{R}_{::22} =
\begin{bmatrix}
0 & 0.285 & 0\\
0 & 0.378 & 0\\
0 & 0.4849 & 0
\end{bmatrix},\\
\end{align*}
and
\begin{align*}
\mathscr{O}_{::11} &=
\begin{bmatrix}
1 & 0 & 0\\
0 & 0 & 0\\
0 & 0 & 0.5
\end{bmatrix}, 
\mathscr{O}_{::21} =
\begin{bmatrix}
0 & 0 & 0\\
0.04 & 0.15 & 0.285\\
0 & 0 & 0
\end{bmatrix},\\
\mathscr{O}_{::12} &=
\begin{bmatrix}
0 & 0 & 0\\
0 & 1 & 0\\
0 & 0 & 0
\end{bmatrix},\hspace{0.3cm}
\mathscr{O}_{::22} =
\begin{bmatrix}
0.1 & 0.25 & 0.4\\
0 & 0 & 0\\
0.057 & 0.1825 & 0.378
\end{bmatrix}.\\
\end{align*}
Furthermore, $\text{rank}_U(\mathscr{R})=6$ and $\text{rank}_U(\mathscr{O})=6$, and the system therefore is both reachable and observable. For all the computations in this example, we used the unfolding transform $\varphi$ which enabled us to use standard matrix algebra.

\emph{Remark:} Note that unfolding transform allows one to transform tensor algebra problems to standard matrix algebra problems. However, it may not be the most memory and numerically efficient approach. In fact, computing tensor algebraic notions without unfolding is an active area of research \cite{doi:10.1137/100804577,tensortrain,doi:10.1080/03081087.2018.1500993,Huang_2018}, and we are currently exploring methods based on that for computations associated with the MLTI systems.

\section{Conclusion} \label{sec:future}
In this paper, we generalized the  MLTI system representation using even-order paired tensors and the Einstein product.  This new representation also facilitated the generalization of notions of  stability, reachability and observability from classical multivariate control theory to that for MLTI systems. In particular, the unfolding isomorphism played a key role in establishing criterion for stability, reachability and observability for MLTI systems.

In future work, it should be worthwhile to develop an associated theoretical and computational framework for data driven model identification/reduction, observer  and feedback control design, and to apply these techniques to real world engineering systems and machine learning. One particular application we plan to investigate is that of cellular reprogramming which involves introducing transcription factors as a control mechanism to transform one cell type to another. 
These systems naturally have matrix or tensor state spaces describing their
genome-wide structure and gene expression \cite{Ronquist11832,LIU2018}. Such applications would also ideally be analyzed using  nonlinearity and stochasticity in the multiway dynamical system representation and analysis framework. This is an important direction for future research.

\smallskip
\noindent \textbf{Acknowledgments}: We thank Dr. Frederick Leve at the Air Force Office of Scientific Research (AFOSR) for support and encouragement. This work is supported in part under AFOSR Award No: FA9550-18-1-0028, NSF grant DMS 1613819 and the Lifelong Learning Machines program from DARPA/MTO.

\end{document}